\theoremstyle{plain}
\newtheorem{theorem}{Theorem}[section]
\newtheorem{proposition}[theorem]{Proposition}
\newtheorem{lemma}[theorem]{Lemma}
\newtheorem{corollary}[theorem]{Corollary}
\theoremstyle{definition}
\newtheorem{remark}[theorem]{Remark}
\newtheorem{example}[theorem]{Example}
\newtheorem{conjecture}{Conjecture}
\numberwithin{equation}{section}
\newcommand{\N}{\mathbb{N}}
\newcommand{\Z}{\mathbb{Z}}
\newcommand{\set}[2]{\{#1\,|\ #2\}}
\newcommand{\sub}{\subseteq}
\newcommand{\tabulkaa}[9]{
\begin{tabular}{c}
#1\\
\begin{tabular}{ccc}
\begin{tabular}[t]{r|cc}%
%\hline
$+$ & $w$ & $a$ \\\hline%\hline
$w$ & #2 & #3 \\
%\hline
$a$ & #4 & #5\\
%\hline
\end{tabular}
&&
\begin{tabular}[t]{r|cc}%
%\hline
$\cdot$ & $w$ & $a$ \\\hline%\hline
$w$ & #6 & #7 \\
%\hline
$a$ & #8 & #9\\
%\hline
\end{tabular}
\end{tabular}
\end{tabular}
}
\newcommand{\tabulkab}[9]{
\begin{tabular}{c}
#1\\
\begin{tabular}{ccc}
\begin{tabular}[t]{r|cc}%
%\hline
$+$ & $a$ & $w$ \\\hline%\hline
$a$ & #2 & #3 \\
%\hline
$w$ & #4 & #5\\
%\hline
\end{tabular}
&&
\begin{tabular}[t]{r|cc}%
%\hline
$\cdot$ & $a$ & $w$ \\\hline%\hline
$a$ & #6 & #7 \\
%\hline
$w$ & #8 & #9\\
%\hline
\end{tabular}
\end{tabular}
\end{tabular}
}
\begin{document}
\title[Congruence-simple multiplicatively idempotent semirings]{Congruence-simple multiplicatively idempotent semirings}

 \author[T.~Kepka]{Tom\'{a}\v{s}~Kepka}
 \address{Department of Algebra, Faculty of Mathematics and Physics, Charles University, Sokolovsk\'{a} 83, 186 75 Prague 8, Czech Republic}
 \email{kepka@karlin.mff.cuni.cz}

\author[M.~Korbel\'a\v{r}]{Miroslav~Korbel\'a\v{r}}
\address{Department of Mathematics, Faculty of Electrical Engineering, Czech Technical University in Prague, Technick\'{a} 2, 166 27 Prague 6, Czech Republic}
\email{miroslav.korbelar@gmail.com}

\author[G.~Landsmann]{\textsc{G\"{u}nter Landsmann}}
\address{Research Institute for Symbolic Computation, Johannes Kepler University, Alten\-bergerstr. 69, A-4040 Linz, Austria}
\email{landsmann@risc.jku.at}

% \thanks{The second author was supported by the project of the Czech Grant Agency (GA\v{C}R) 20-09869L and by the project CAAS CZ.02.1.01/0.0/0.0/16\_019/0000778. The third author was supported by the project of the Austrian Science Fund (FWF) I 4579-N}

\thanks{
The second and third authors acknowledge the support by the bilateral Austrian Science Fund (FWF) project I 4579-N and Czech Science Foundation (GA\v CR) project 20-09869L ``The many facets of orthomodularity''}

\keywords{congruence, simple, semiring, multiplicatively idempotent, multiplicatively absorbing}
\subjclass[2010]{06D99, 16Y60}
\date{\today}

%06A12 Order, lattices, ordered algebraic structures - Ordered sets - Semilattices
%06D99 Order, lattices, ordered algebraic structures - Distributive lattices - None of the above, but in this section
%16Y60 ASSOCIATIVE RINGS AND ALGEBRAS - Generalizations - Semirings
%20M14 Semigroups - Commutative semigroups

% = Abstract ========================================================
\begin{abstract}
Let $S$ be a multiplicatively idempotent congruence-simple  semiring. We show that $|S|=2$ if $S$ has a multiplicatively absorbing element. We also prove that if  $S$ is finite then either  $|S|=2$ or $S\cong End(L)$ or $S^{op}\cong End(L)$ where $L$ is a 2-element semilattice. It seems to be an open question, whether $S$ can be infinite at all.
\end{abstract}

\maketitle
%\vspace{4ex}

%In the theory of semirings, the congruence-simple ones provide the simplest cases of the subdirectly irreducible semirings that, according to the Birkhoff's theorem, are the basic buildings blocks in the variety of semirings.

%\section{Multiplicatively idempotent (and bi-idempotent) semirings}

The class of finite (congruence-)simple lattices is quite opulent. Even in the case of the modular ones, there are infinitely many (non-isomorphic) examples of such simple lattices  (see e.g. \cite{graetzer,schmidt}). Of course, a distribute lattice is simple if and only if it is a two-element chain. Now, distributive lattices form an (equational) subclass of the class $\mathcal{M}$ of all multiplicatively idempotent semirings. 

The main aim of this paper is to show that (up to isomorphism) there exist exactly eight finite congruence-simple  semirings in $\mathcal{M}$ (six of them are two-element and the remaining two are three-element). We also prove that every semiring in $\mathcal{M}$ possessing a multiplicatively absorbing element is finite (and has only two elements). It seems to be an open question, whether there are infinite congruence-simple semirings in $\mathcal{M}$ at all.

On the other hand, one can find infinite semirings in $\mathcal{M}$ that are bi-ideal-simple (ideal-simple, resp.). Moreover, there are infinitely many (non-isomorphic) examples of finite  semirings in $\mathcal{M}$ that are bi-ideal-simple (ideal-simple, resp.) (see, e.g., Examples \ref{ex_1} and \ref{ex_3}).

%It is almost fifty years that a very short but surprisingly interesting article appeared \cite{cornish}. It was proved there that a congruence-simple semiring with a zero and no non-trivial nilpotent elements has no non-trivial zero-divisors. Besides, an example was provided that this is not longer true for subdirectly irreducible semirings.  This result is a contribution to 

\section{Preliminaries}

A \emph{semiring} $S$ is a non-empty set equipped with two associative binary operations (usually denoted as addition and multiplication) such that the addition is commutative and the multiplication distributes over the addition from both sides. The semiring $S$ is called \emph{commutative} if the multiplication is commutative.

For non-empty subsets $A,B\sub S$ we will use the usual notation of their sum and product as $A+B=\set{a+b}{a\in A, b\in B}$ and $AB=\set{a\cdot b}{a\in A, b\in B}$. If $A=\{a\}$ for some $a\in S$, we sometimes omit the brackets for a simpler notation.

A non-empty subset $I$ of $S$ is an \emph{ideal} (\emph{bi-ideal}, resp.) of $S$ if $(I+I)\cup SI\cup IS\sub I$ ($(S+I)\cup SI\cup IS\sub I$, resp.). A semiring $S$ is called \emph{congruence-simple} if $S$ has just two congruences and \emph{ideal-simple} (\emph{bi-ideal-simple}, resp.) if $|S|\geq 2$ and $I=S$ whenever $I$ is an ideal (bi-ideal, resp.) containing at least two elements. 

A semiring $S$ is called \emph{multiplicatively (additively, resp.) idempotent} if $x^2=x$ ($x+x=x$, resp.) for every $x\in S$. A semiring that is both additively and multiplicatively idempotent will be called \emph{bi-idempotent}. 

%A semiring $S$ is called \emph{commutative} if so is the multiplicative semigroup $S(\cdot)$.

An element $w\in S$ is called 
\begin{itemize}
\item \emph{right (left, resp.) multiplicatively  absorbing} if $Sw=\{w\}$ ($wS=\{w\}$, resp.);
\item \emph{multiplicatively absorbing} if $w$ is both right and left multiplicatively absorbing;
\item \emph{additively absorbing} if $S+w=\{w\}$;
\item \emph{multiplicatively (additively, resp.) neutral} if $xw=x=wx$ ($x+w=x$, resp.) for every $x\in S$; 
\item \emph{bi-absorbing} if it is both multiplicatively and additively absorbing (such an element will be denoted by $o_S$);
\item a \emph{zero} if it is multiplicatively absorbing and additively neutral (such an element will be denoted by $0_S$).
\end{itemize}
% An element $w\in S$ is called \emph{multiplicatively (additively, resp.) absorbing} if $Sw=\{w\}=wS$ ($S+w=S$, resp.)  for every $x\in S$. An element $w\in S$ is called \emph{multiplicatively (additively, resp.) neutral}, if $xw=x=wx$ ($x+w=x$, resp.) for every $x\in S$. 
% An element $w\in S$ is called bi-absorbing if it is both multiplicatively and additively absorbing. Such an element will be denoted by $o_S$.
% An element $w\in S$ is called a zero if it is multiplicatively absorbing and additively neutral. Such an element will be denoted by $0_S$.

For a semiring $S(+,\cdot)$ the opposite semiring $S^{op}(+,\ast)$ is defined as $a\ast b=b\cdot a$ for every $a,b\in S^{op}=S$.

For a semiring $S$ with a multiplicatively absorbing element $w\in S$ and $T=S\setminus\{w\}$ let us denote by $\varrho_S=(T\times T)\cup\{(w,w)\}$ an equivalence on $S$.

% Let $S$ be a semiring possessing a multiplicatively absorbing element $w$. An element $x\in S$ is called \emph{nilpotent} if $x^n=w$ for some $n\in\N$. Such a nilpotent element $x$ is \emph{non-trivial} if $x\neq w$.

\begin{remark}\label{remark_1}
Let $S$ be a semiring.

(i) Clearly, if $S$ is ideal-simple, then $S$ is bi-ideal-simple. Also, if $S$ is congruence-simple,  then  $S$ is bi-ideal-simple as well.

Indeed, if $I$ is a bi-ideal of $S$ and $|I|\geq 2$ then the equivalence relation $\varrho=(I\times I) \cup \set{(a,a)}{a\in S}$ is a congruence on $S$ such that $\varrho\neq id_S$. If $S$ is congruence-simple, we obtain that $\varrho=S\times S$ and therefore  $I=S$.

(ii) Let $S$ have a multiplicatively absorbing element $w\in S$. Let $\varrho$ be a congruence on $S$ and $I=\set{a\in S}{(a,w)\in\varrho}$ be the block of this congruence. Clearly, $I=S$ if and only if $\varrho=S\times S$. If $w=o_S$ then $I$ is a bi-ideal of $S$ and, similarly, if $w=0_S$ then $I$ is an ideal of $S$.

% If $w=o_S$ then $I$ is a bi-ideal of $S$ and if, moreover, $S$ is bi-ideal-simple then $I=\{o_S\}$.
% Similarly, if $w=0_S$ then $I$ is an ideal of $S$ and if, in addition, $S$ is ideal-simple, then $I=\{0_S\}$. 

(iii) Let $S$ have a bi-absorbing element $o_S\in S$ and $T=S\setminus\{o_S\}\neq\emptyset$. Then $\varrho_S$ is an equivalence defined on $S$ and $id_S\neq\varrho_S\neq S\times S$ if and only if $|S|\geq 3$. Besides, $\varrho_S$ is a congruence on the semiring $S$ in each of the following cases:
\begin{enumerate}
 \item[(1)] $T+T=\{o_S\}$, $TT=\{o_S\}$;
 \item[(2)] $T+T=\{o_S\}$, $TT\sub T$;
 \item[(3)] $T+T\sub T$, $TT=\{o_S\}$;
 \item[(4)] $T+T\sub T$, $TT\sub T$.
\end{enumerate}

(iv) If the semiring $S$ is non-trivial and $0_S\in S$ then $S$ is bi-ideal-simple (in fact, the only bi-ideal of $S$ is $S$ itself).

(v) Every two-element semiring is both congruence-simple and ideal-simple.

(vi) Let $S$ be multiplicatively idempotent and additively cancellative (i.e., $a+c\neq b+c$ for all $a,b,c\in S$ such that $a\neq b$). We claim that $S$ is a Boolean ring.

Indeed, as $S$ is additively cancellative, $S$ is a subsemiring of some ring $R$. For every $a,b\in S$ we have $a+b=(a+b)^2=a^2+b^2+ab+ba=a+b+ab+ba$ and therefore $0_R=ab+ba\in S$. In particular $0_S=0_R\in S$ and we obtain that $0_S=a^2+a^2=a+a$ for every $a\in S$. Therefore $S$ is a ring of characteristic $2$. Finally, we have $ab=-ba=ba$ for all $a,b\in S$ and $S$ is thus commutative. Henceforth, $S$ is a Boolean ring.

If, moreover, the (semi)ring $S$ is congruence-simple or ideal-simple then $S$ is a field and, by the multiplicative idempotency, $S$ is isomorphic to the two-element field $\Z_2$. 

(vii) \cite[Theorem 2.2]{cornish} Let $S$ be congruence-simple and $0_S\in S$. If $a^2\neq 0_S$ for every $a\in S$, $a\neq 0_S$, then $S$ has no proper divisors of zero. 
\end{remark}

\begin{remark}\label{idempotent}
 Let $B$ be a band  (i.e., an idempotent semigroup). If $b\in BaB$ for some $a\in B$ then $b=bab$.
 
Indeed, if $b=cad$ for some $c,d\in B$. Then we have $b=cad=c(ad)^2=(cad)ad=bad$ and, similarly, $b=bad=(ba)^2d=ba(bad)=bab$. 

Notice that the set $BaB$ is the principal ideal of the semigroup $B$ generated by the element $a$. 
If the band $B$ is ideal-simple, then $b=bab$ for all $a,b\in B$ (i.e., the band is rectangular).
\end{remark}

\section{Bi-ideal-simple multiplicatively idempotent semirings with a  multiplicatively absorbing element}

Throughout this section, let $S$ be a multiplicatively idempotent bi-ideal-simple semiring possessing a multiplicatively absorbing element $w$. We put $T=S\setminus\{w\}$.

\begin{lemma}\label{2.1}
 Put $A(S)=\set{a\in S}{SaS+S=\{w\}}$ and $B(S)=\set{a\in S}{SaS+S=S}$. Then:
 \begin{enumerate}
  \item[(i)] $2w=w$ and either $w=0_S$ or $w=o_S$.
  \item[(ii)] $B(S)=S\setminus A(S)$.
  \item[(iii)] Either $A(S)=\emptyset$ or $A(S)$ is a bi-ideal of $S$.  
  \item[(iv)] $2a=w$ for every $a\in A(S)$.
 \end{enumerate}
\end{lemma}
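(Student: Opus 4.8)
The common thread in all four parts is that $w$, being multiplicatively absorbing, satisfies $waw=(wa)w=ww=w$, so $w\in SaS$ for every $a\in S$, and hence $w$ lies in the set $SaS+S$ (and in $S+w$). My plan is to verify by routine closure computations that each set occurring in the statement is a bi-ideal of $S$, and then to invoke bi-ideal-simplicity: such a bi-ideal is either $\{w\}$ or all of $S$, and these are distinct since $|S|\ge 2$.

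I would start with (i). Distributivity gives $w+w=ww+ww=w(w+w)$, and the right-hand side equals $w$ because $w$ is multiplicatively absorbing, so $2w=w$. Next, $S+w$ is a bi-ideal: closure under $+$ is automatic, and $s(s'+w)=ss'+w\in S+w$ (likewise on the right, using $Sw=wS=\{w\}$). By bi-ideal-simplicity $S+w$ is a singleton or is $S$; if it is a singleton it contains $w+w=w$ and hence equals $\{w\}$, so $w$ is additively absorbing and $w=o_S$; if $S+w=S$ then, writing an arbitrary $s\in S$ as $s=s'+w$, we get $s+w=s'+2w=s'+w=s$, so $w$ is additively neutral and $w=0_S$. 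For (ii): $SaS+S$ is a bi-ideal (for instance $s(xay+t)=(sx)ay+st\in SaS+S$, and the other inclusions are analogous) and contains $w$, so it is $\{w\}$, meaning $a\in A(S)$, or $S$, meaning $a\in B(S)$; hence $B(S)=S\setminus A(S)$. For (iv): multiplicative idempotency gives $a=a\cdot a\cdot a\in SaS$, so for $a\in A(S)$ we have $a+a\in SaS+S=\{w\}$, i.e. $2a=w$.

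The real content is in (iii), and the observation that unlocks it is this: if $A(S)\ne\emptyset$, pick $a\in A(S)$; since $w\in SaS$ and $w\in S$, every $v\in S$ satisfies $w+v\in SaS+S=\{w\}$, so $w$ is additively absorbing and $w=o_S$. Once $w=o_S$ the closure arguments become immediate. For $s\in S$ one has $S(sa)S=\{(xs)ay\}\subseteq SaS$ and $S(as)S=\{xa(sy)\}\subseteq SaS$, so $S(sa)S+S$ and $S(as)S+S$ are nonempty subsets of $SaS+S=\{w\}$, forcing $sa,as\in A(S)$; and any element of $S(s+a)S+S$ can be written $xsy+(xay+t)$ with $xay+t\in SaS+S=\{w\}$, hence equals $xsy+w=w$ (using that $w=o_S$ is additively absorbing), so $s+a\in A(S)$. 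Thus $A(S)$ is a bi-ideal. The main obstacle here is logical rather than computational: one cannot assume $w=o_S$ a priori (the case $w=0_S$ genuinely occurs), so the argument must first extract $w=o_S$ from the nonemptiness of $A(S)$, after which the closure properties fall out at once.
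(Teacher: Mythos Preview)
Your proof is correct in all four parts, and for (i), (ii), (iv) it matches the paper's argument essentially verbatim. The divergence is in (iii), where you take a small detour that the paper avoids.

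You argue that $A(S)\ne\emptyset$ forces $w=o_S$ (since $w\in SaS$ gives $w+v\in SaS+S=\{w\}$ for all $v$), and then use additive absorption of $w$ to handle $S(s+a)S+S$. This is valid, and the auxiliary fact ``$A(S)\ne\emptyset\Rightarrow w=o_S$'' is a nice byproduct. However, your claim that ``the argument \emph{must} first extract $w=o_S$'' is not right: the paper simply regroups the other way. An arbitrary element of $S(a+s)S+S$ has the form
\[
x(a+s)y+t \;=\; xay + xsy + t \;=\; xay + (xsy+t) \;\in\; SaS + S \;=\; \{w\},
\]
so $S(a+s)S+S\subseteq\{w\}$ directly, with no appeal to whether $w=0_S$ or $w=o_S$. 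The multiplicative inclusions $S(as)S+S,\,S(sa)S+S\subseteq SaS+S$ you already have. Thus the paper's proof of (iii) is a one-liner, while yours introduces an extra (correct) step that turns out to be dispensable. What your route buys is the additional information that $A(S)$ can be nonempty only in the bi-absorbing case; what the paper's route buys is brevity and independence from part (i).
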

\begin{proof}
(i) We have $2w = w + w = w^2 + w^2 =
w(w + w) = w$. Further, the set $S+w$ is  a bi-ideal of $S$. If $|S+w|=1$ then $w=o_S$. Assume that $|S+w|\geq 2$. Since $S$ is bi-ideal-simple, we have $S+w=S$. Then for every $a\in S$ there is $b\in S$ such that $b+w=a$. Hence  $a+w=b+w+w=b+w=a$ and therefore $w=0_S$.
  
  (ii) For every $a\in S$ the set $SaS+S$ is a bi-ideal of $S$ and $w\in SaS+S$. As $S$ is bi-ideal simple, it follows that either $|SaS+S|=1$ or $SaS+S=S$. The rest is obvious.
 
 (iii) Let $a\in A(S)$ and $s\in S$. Then $S(a+s)S+S\sub SaS+S=\{w\}$,  $S(as)S+S\sub SaS+S=\{w\}$ and $S(sa)S+S\sub SaS+S=\{w\}$. Hence $s+a,sa,as\in A(S)$ and $A(S)$ is a bi-ideal of $S$.
 
 (iv) For every $a\in A(S)$ we have $2a=a^3+a\in SaS+S=\{w\}$.
\end{proof}

\begin{proposition}\label{2.1.1}
Assume that  $|S|=2$. Then $S$ is isomorphic to exactly one of the following four commutative semirings:

\begin{center}
\begin{tabular}[t]{cc}%
\tabulkaa{$\mathbb{S}_1$}{$w$}{$a$}{$a$}{$w$}{$w$}{$w$}{$w$}{$a$} &
\tabulkaa{$\mathbb{S}_2$}{$w$}{$a$}{$a$}{$a$}{$w$}{$w$}{$w$}{$a$} \\
&\\[6pt]
\tabulkaa{$\mathbb{S}_3$}{$w$}{$w$}{$w$}{$a$}{$w$}{$w$}{$w$}{$a$} &
\tabulkaa{$\mathbb{S}_4$}{$w$}{$w$}{$w$}{$w$}{$w$}{$w$}{$w$}{$a$}\\
\end{tabular}
\end{center}
\bigskip

The semirings $\mathbb{S}_1$, $\mathbb{S}_2$ have a zero element $w=0_S$ and the semirings $\mathbb{S}_3$, $\mathbb{S}_4$ have a bi-absorbing element $w=o_S$. The only case that is a ring is $\mathbb{S}_1$.
\end{proposition}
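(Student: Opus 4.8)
The plan is to note that, given the hypotheses, almost everything is forced, so the statement reduces to a small finite verification. Write $S=\{w,a\}$ with $a\neq w$. Since $w$ is multiplicatively absorbing, $w\cdot w=w\cdot a=a\cdot w=w$, and multiplicative idempotency gives $a\cdot a=a$; thus the multiplication of $S$ is completely determined and is commutative. By Lemma~\ref{2.1}(i) we have $w+w=w$, and since the addition is commutative, the addition table of $S$ is determined by the pair $\bigl(a+a,\ a+w\bigr)\in\{w,a\}^{2}$. Hence there are at most four such semirings, corresponding to the four values $(w,a)$, $(a,a)$, $(a,w)$, $(w,w)$ of this pair, and these are precisely $\mathbb{S}_1$, $\mathbb{S}_2$, $\mathbb{S}_3$, $\mathbb{S}_4$.

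Next I would check that each of the four tables really defines a (commutative) semiring. Commutativity and associativity of the multiplication and commutativity of the addition are built in, so only associativity of the addition and the two distributive laws need to be verified. Associativity of $+$ follows from a short case check on triples from $\{w,a\}$ using $w+w=w$ and commutativity. For distributivity, since $w\cdot x=x\cdot w=w$ for all $x\in S$, the identities with $w$ on the outside are automatic, and it remains only to verify $a\cdot(y+z)=a\cdot y+a\cdot z$ for $y,z\in\{w,a\}$; this is immediate in each case (for instance $a\cdot(a+w)=a\cdot a+a\cdot w$ reduces to $a\cdot(a+w)=w+a$, which holds whether $a+w=w$ or $a+w=a$).

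The only step that calls for an argument rather than a computation is pairwise non-isomorphism, and the key observation there is that a multiplicatively absorbing element is unique: if $w$ and $w'$ are both multiplicatively absorbing, then $w=w\cdot w'=w'$. Consequently any isomorphism between two of the $\mathbb{S}_i$ must send $w$ to $w$, hence $a$ to $a$, hence must carry the addition table to the addition table; but the four pairs $\bigl(a+a,\ a+w\bigr)$ listed above are pairwise distinct, so no two of $\mathbb{S}_1,\dots,\mathbb{S}_4$ are isomorphic. Combined with the first step---every two-element semiring with a multiplicatively absorbing element is literally one of these four tables on $\{w,a\}$---this gives that $S$ is isomorphic to exactly one of them.

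Finally I would read off the remaining assertions from the tables. If $a+w=a$, then $w$ is additively neutral and, being also multiplicatively absorbing, is a zero, $w=0_S$; this occurs exactly for $\mathbb{S}_1$ and $\mathbb{S}_2$. If $a+w=w$, then $w$ is additively absorbing, hence bi-absorbing, $w=o_S$; this is the case of $\mathbb{S}_3$ and $\mathbb{S}_4$. For the last claim, $S$ is a ring iff $(S,+)$ is an abelian group: in $\mathbb{S}_1$ we have $a+a=w=0_S$, so $(S,+)$ is the group of order two and $\mathbb{S}_1\cong\Z_2$ is a ring, whereas in each of $\mathbb{S}_2,\mathbb{S}_3,\mathbb{S}_4$ the addition is not cancellative ($a+a=a+w$ in $\mathbb{S}_2$, and $a+w=w+w$ in $\mathbb{S}_3$ and in $\mathbb{S}_4$), so none of them is a ring. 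I expect no real obstacle here; the whole argument is routine, the only genuinely necessary (if minor) point being the uniqueness of the multiplicatively absorbing element, which makes the four semirings rigid enough to be pairwise non-isomorphic.
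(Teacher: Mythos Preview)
Your proof is correct and follows essentially the same route as the paper's: both determine the multiplication from idempotency and the absorbing property of $w$, invoke Lemma~\ref{2.1}(i), and enumerate the remaining choices for the addition. The paper is terser---it uses the full conclusion of Lemma~\ref{2.1}(i) (that $w=0_S$ or $w=o_S$) to pin down $a+w$ directly, leaving only $a+a$ free---whereas you treat $(a+a,a+w)$ as a pair of unknowns; and the paper omits the explicit non-isomorphism argument and the ring discussion that you supply, but the substance is the same.
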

\begin{proof}
 It is easy to verify that $\mathbb{S}_1$, $\mathbb{S}_2$, $\mathbb{S}_3$ and $\mathbb{S}_4$ are semirings. Let $S=\{w,a\}$, where $a\neq w$. By Lemma \ref{2.1}(i), either $w=0_S$ or $w=o_S$. Since $S$ is multiplicatively idempotent, the multiplication is fully determined. The addition is also determined  up to the case $a+a\in\{a,w\}$ (with $w$ being either a zero or a bi-absorbing element). All these four possibilities are represented by the cases $\mathbb{S}_1$, $\mathbb{S}_2$, $\mathbb{S}_3$ and $\mathbb{S}_4$.
\end{proof}

\begin{lemma}\label{2.4}
 Assume that $S+S=\{w\}$. Then $w=o_S$, the relation $\varrho_S$ is a congruence on $S$ and $S/\varrho_S\cong\mathbb{S}_4$.
\end{lemma}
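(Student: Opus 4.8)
The plan is to proceed in three stages: first establish that $w=o_S$, then verify that $\varrho_S$ is a congruence, and finally identify the quotient with $\mathbb{S}_4$.

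For the first stage, I would argue that under the hypothesis $S+S=\{w\}$ the element $w$ cannot be a zero. By Lemma \ref{2.1}(i) we know $2w=w$ and $w$ is either $0_S$ or $o_S$. If $w=0_S$, then $w$ is additively neutral, so for any $a\in S$ we would have $a=a+w\in S+S=\{w\}$, forcing $S=\{w\}$. But then $\varrho_S$ is not even the nontrivial equivalence we want (and more to the point the statement $S/\varrho_S\cong\mathbb{S}_4$ presupposes $|S|\geq 3$, or at least $|S|\geq 2$, so $S$ is genuinely larger). Hence $w=o_S$; that is, $S+w=\{w\}$ as well, which is anyway immediate from $S+S=\{w\}$ once $|S|\geq 2$.

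For the second stage, recall from Remark \ref{remark_1}(iii) that when $S$ has a bi-absorbing element $o_S$ and $T=S\setminus\{o_S\}\neq\emptyset$, the equivalence $\varrho_S$ is a congruence as soon as one of the four listed alternatives holds. Here $T+T\subseteq S+S=\{w\}=\{o_S\}$, so in particular $T+T=\{o_S\}$, and it remains only to check the multiplicative side: either $TT=\{o_S\}$ (case (1)) or $TT\subseteq T$ (case (2)). Since $S$ is multiplicatively idempotent, $w=w^2\in TT$ is impossible to rule out a priori, but in any event $TT\subseteq S = T\cup\{o_S\}$, and whichever of (1) or (2) occurs, Remark \ref{remark_1}(iii) gives that $\varrho_S$ is a congruence. (One can phrase this uniformly: $T+T=\{o_S\}$ together with $TT\subseteq S$ always lands in case (1) or case (2).)

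For the third stage, $S/\varrho_S$ has exactly two elements, namely the class $\{o_S\}$ and the class $T$; write $\bar a$ for the class $T$. Since $\bar a+\bar a$ is the class of $a+a\in T+T=\{o_S\}$, we get $\bar a+\bar a=\bar{o_S}$, and $\bar a+\bar{o_S}=\bar{o_S}$ because $o_S$ is additively absorbing; thus addition on the quotient is the constant-$\bar{o_S}$ table. Multiplicatively, $\bar a\cdot\bar a$ is the class of $a^2=a\in T$, so $\bar a\cdot\bar a=\bar a$, while $\bar{o_S}$ absorbs multiplicatively. Comparing with the displayed tables, this is exactly $\mathbb{S}_4$ (with the role of the two-element-semiring symbol "$w$" played by $\bar{o_S}$ and "$a$" by $\bar a$). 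I expect no real obstacle here; the only point needing a word of care is the first stage, namely explaining why the degenerate possibility $w=0_S$ is excluded — it is excluded precisely because $S+S=\{w\}$ forces every element to equal $w$ if $w$ were additively neutral, contradicting that the quotient is supposed to be the genuinely two-element semiring $\mathbb{S}_4$.
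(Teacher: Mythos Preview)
Your second stage has a genuine gap. You assert that since $TT\subseteq S=T\cup\{o_S\}$, one ``always lands in case (1) or case (2)'' of Remark~\ref{remark_1}(iii). But those four cases are not exhaustive: nothing forbids $TT$ from meeting both $T$ and $\{o_S\}$. Indeed, by multiplicative idempotency $a^2=a\in T$ for every $a\in T$, so case~(1) is already excluded; if in addition some product $bc=o_S$ with $b,c\in T$, then neither (1) nor (2) holds, and $\varrho_S$ genuinely fails to be multiplicatively compatible (take $a=c$, so $(a,b)\in\varrho_S$ yet $(ac,bc)=(c,o_S)\notin\varrho_S$). So you must actually \emph{prove} $TT\subseteq T$, and this is where the bi-ideal-simplicity hypothesis---which your argument never invokes---does the real work.

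The paper establishes $TT\subseteq T$ as follows. For $a\in T$ one has $o_S\in SaS$ (since $o_S$ is multiplicatively absorbing) and $a=a^3\in SaS$; moreover $S+SaS\subseteq S+S=\{o_S\}\subseteq SaS$, so $SaS$ is a bi-ideal with at least two elements, and bi-ideal-simplicity forces $SaS=S$. By the band identity of Remark~\ref{idempotent}, every $b\in S$ then satisfies $b=bab$; for $b\in T$ this reads $o_S\neq b=b(ab)$, whence $ab\neq o_S$. Thus $TT\subseteq T$, which is case~(2), and the remainder of your argument goes through. (Your Stage~1 is correct but can be shortened: $S+w\subseteq S+S=\{w\}$ already shows $w$ is additively absorbing, hence $w=o_S$ directly.)
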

\begin{proof}
As $S+S=\{w\}$, the element $w$ is  bi-absorbing, i.e., $w=o_S$. For every $a\in T= S\setminus\{o_S\}$, we have $o_S\in SaS$. Hence  the set $SaS$ is a bi-ideal of the semiring $S$ and $|SaS|\geq 2$. Since the semiring $S$ is is bi-ideal-simple, we get $SaS=S$ and, consequently $b=bab$ for every $b\in T$, by Remark \ref{idempotent}.  It follows that $o_S\neq b=b(ab)$ and therefore $ab\neq o_S$. That is, $TT\sub T$.  Now, it follows from Remark \ref{remark_1}(iii)(2) that $\varrho_S$ is a congruence on the semiring $S$. As $|S/\varrho_S|=2$, we see readily that $S/\varrho_S\cong \mathbb{S}_4$, by Proposition \ref{2.1.1}.
\end{proof}

\begin{lemma}\label{2.6}
 Assume that $S$ is additively idempotent and $w=o_S$. Then $TT\sub T$ and $T+T\sub T$. 
\end{lemma}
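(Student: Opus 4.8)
We are given that $S$ is a multiplicatively idempotent bi-ideal-simple semiring, that it is additively idempotent, and that the multiplicatively absorbing element $w$ equals $o_S$, so $a + w = w = wa = aw$ for all $a \in S$ and $a + a = a$ for all $a$. Write $T = S \setminus \{o_S\}$. I want to show $TT \subseteq T$ and $T + T \subseteq T$; equivalently, that $T$ is closed under both operations, i.e. neither $ab$ nor $a+b$ can equal $o_S$ when $a, b \in T$. The natural tool is the same one used in Lemma \ref{2.4}: for $a \in T$ the set $SaS$ contains $o_S$ (since $o_S = o_S \cdot a \cdot o_S$), hence is a bi-ideal of size $\geq 2$, so by bi-ideal-simplicity $SaS = S$; then Remark \ref{idempotent} (the band/rectangular-band observation applied to the multiplicative semigroup) gives $b = bab$ for all $b \in T$.

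First I would handle multiplicative closure. Fix $a, b \in T$ and suppose for contradiction $ab = o_S$. Using $b = bab$ from the previous paragraph (valid since $SbS = S$, say), or working directly: from $b = b a b$ we get $b = b(ab)b = b o_S b = o_S$, contradicting $b \in T$. (One has to be a little careful about which element plays which role in Remark \ref{idempotent}; the cleanest route is: for any $c, d \in T$ we have $d \in ScS = S$, so $d = dcd$ by the band lemma, and then if $cd = o_S$ we'd get $d = d(cd) \cdot$ — need $d = dcd$ with the factorisation $d = d \cdot c \cdot d$, so $d = dc \cdot d$; but also $d = d \cdot cd$? not quite — better: $d = dcd = (dc)(d)$, and separately $dcd = d(cd)$, so $d = d(cd) = d \cdot o_S = o_S$, contradiction.) Hence $TT \subseteq T$.

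Next, additive closure. Fix $a, b \in T$ and suppose $a + b = o_S$. Here is where additive idempotency enters. Multiply the equation $a + b = o_S$ on the left by $a$: $a \cdot a + a \cdot b = a \cdot o_S$, i.e. $a + ab = o_S$ (using $a^2 = a$ and $a \, o_S = o_S$). But $ab \in T$ by the multiplicative closure just established, so $a + ab$ is a sum of two elements of $T$. I would like to leverage $a = a + a$: from $a + ab = o_S$, add $a$ again and use additive idempotency — $a + a + ab = a + ab = o_S$, which gives nothing new. Instead, observe $a + ab = a(1)$ — there is no $1$. Better: from $a + b = o_S$ and additive idempotency, $o_S = o_S + b = (a+b) + b = a + (b + b) = a + b = o_S$, again circular. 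The productive move is probably: multiply $a + b = o_S$ by $b$ on the right to get $ab + b = o_S$; combined with $a + ab = o_S$ from before and $ab \in T$, set $c = ab \in T$ and note $a + c = o_S$ and $c + b = o_S$. I expect one then iterates or uses that $c = c + c$ together with bi-ideal-simplicity applied to the bi-ideal $S + a$ (a bi-ideal containing $o_S$ and $a$, hence equal to $S$), to locate some element whose square or product collapses to $o_S$ in a forbidden way.

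**Main obstacle.** The multiplicative part is routine and essentially a copy of the argument inside Lemma \ref{2.4}. The additive part $T + T \subseteq T$ is the real content: additive idempotency is weaker than the additive-cancellativity used in Remark \ref{remark_1}(vi), and $o_S$ being additively absorbing means additive information tends to get swallowed. I expect the key trick is to combine the bi-ideal $S + a = S$ (forcing, for every $s \in S$, the existence of $t$ with $t + a = s$, in particular with $s$ ranging over $T$) with multiplicative idempotency and the already-proven $TT \subseteq T$: writing a general element as $t + a$ and squaring, $(t+a)^2 = t + a$ expands via distributivity into $t + a + ta + at$, and since $t + a = o_S$ would be the hypothesis to contradict, one extracts a relation among products of elements of $T$ that, by $TT \subseteq T$, must stay inside $T$, contradicting that their sum is $o_S$. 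Pinning down exactly which product expansion yields the contradiction without circularity is the delicate step; I would experiment with $(a+b)c$ and $c(a+b)$ for well-chosen $c \in T$ (e.g. $c = a$ or $c = ba$) and with the bi-ideal $S+a$.
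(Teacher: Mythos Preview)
Your multiplicative argument has a genuine gap: the set $SaS$ is \emph{not} a bi-ideal of the semiring in general. A bi-ideal $I$ must satisfy $I+S\subseteq I$, and there is no reason why $xay+s$ should again lie in $SaS$. This worked in Lemma~\ref{2.4} only because the hypothesis there was $S+S=\{w\}$, which forces $SaS+S\subseteq\{o_S\}\subseteq SaS$; that hypothesis is absent here. Consequently you cannot conclude $SaS=S$, and the identity $b=bab$ for all $a,b\in T$, on which your argument for $TT\subseteq T$ rests, is unjustified. Since your sketch for $T+T\subseteq T$ was meant to build on the already-established $TT\subseteq T$, it collapses as well.

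The bi-ideal one should use is $SaS+S$. Additive idempotency combined with Lemma~\ref{2.1}(iv),(ii) gives $A(S)=\{o_S\}$, hence $SaS+S=S$ for every $a\in T$. From $b\in SaS+S$ one obtains only $b=c+z$ with $c\in SaS$, so Remark~\ref{idempotent} applies to $c$ (not to $b$) and yields $c=cac$; additive idempotency then gives the key working relation $b=b+cac$. Both inclusions are derived from this relation by rather delicate manipulations, and in fact the paper establishes $T+T\subseteq T$ first: assuming $a+b=o_S$ one computes $cbc=cbc+cac=c(a+b)c=o_S$, then $cb=cb(b+cac)=cb+(cbc)ac=o_S$, and finally $b=(b+cac)b=b+ca(cb)=o_S$. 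The inclusion $TT\subseteq T$ requires a further refinement of the same idea (passing from $c$ to $e=bcacb\in SaS$ so that $b=b+eae$ contains a factor $ab$). Your attempts at the additive part, which multiply $a+b=o_S$ by $a$ or $b$ and try to bootstrap from $TT\subseteq T$, do not reach such a contradiction.
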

\begin{proof}
 Since $S$ is additively idempotent, it follows from Lemma \ref{2.1}(iv) and (ii), that $A(S)=\{o_S\}$ and $B(S)=S\setminus\{o_S\}=T$. Then, for $a\in T=B(S)$, we obtain that $SaS+S=S$. Let $b\in T$. Since $b\in S=SaS+S$, there are $x,y,z\in S$ such that $b=c+z$ where $c=xay\in SaS$. By Remark \ref{idempotent}, we have that $c=cac$. Now, $b=c+c+z=c+b=b+cac$.
 
 Assume now, for contrary that $a+b=o_S$. By the previous part of the proof, $b=b+cac$ for some $c\in S$. Hence  we have $cbc=c(b+cac)c=cbc+cac=c(a+b)c=o_S$. It follows that $cb=cb\cdot b=cb(b+cac)=cb+(cbc)ac=cb+o_S=o_S$. Finally, we obtain that $b=b\cdot b=(b+cac)b=b+ca(cb)=b+o_S=o_S$, a contradiction. We have shown that $a+b\in S\setminus\{o_S\}=T$. Therefore $T+T\sub T$.

Finally, from $b=b+cac$ it follows that $b=b\cdot b\cdot b=b(b+cac)b=b+bcacb=b+e$ where $e=bcacb\in SaS$. By Remark \ref{idempotent}, $e=eae$ and $b=b+eae$. In particular, we obtain that $b=b+eae=b+ea(bcacb)$. Now if $ab=o_S$, then $b=b+e(ab)cacb=b+o_S=o_S$, a contradiction. Hence we have shown that $ab\in S\setminus\{o_S\}=T$. Therefore $TT\sub T$.
% $$b=b+ras+(ra)^2 s=b+ra(s+ras)=b+ra(s+ras)^2=$$
%  $$=b+ra(s+ras+sras)=(b+ras+raras)+sras=b+sras.$$
%  Similarly,  
%   $$b=b+ras+r(as)^2=b+(r+ras)as=b+(r+ras)^2as=$$
%  $$=b+(r+ras+rasr)=(b+ras+rasas)+sras=b+sras.$$
\end{proof}

\begin{lemma}\label{bi-absorbing}
  Assume that $S$ is additively idempotent and $w=o_S$. Then $\varrho_S$ is a congruence on $S$ and $S/\varrho_S\cong\mathbb{S}_3$.
\end{lemma}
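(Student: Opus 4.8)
The plan is to combine Lemma \ref{2.6} with part (2) of Remark \ref{remark_1}(iii) to get that $\varrho_S$ is a congruence, and then identify the quotient via Proposition \ref{2.1.1}. More precisely, Lemma \ref{2.6} tells us that under the present hypotheses ($S$ additively idempotent, $w=o_S$) we have $TT\sub T$ and $T+T\sub T$; this is exactly case (4) of Remark \ref{remark_1}(iii), so $\varrho_S$ is a congruence on $S$. (Note that Remark \ref{remark_1}(iii) also requires $T\neq\emptyset$; this holds because a bi-ideal-simple semiring has $|S|\geq 2$ by definition, so $T=S\setminus\{o_S\}\neq\emptyset$.)

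Next I would pass to the quotient $S/\varrho_S$. Since $\varrho_S$ has exactly two blocks, namely $T$ and $\{o_S\}$, the quotient $S/\varrho_S$ is a two-element semiring, with $o_S$ (more precisely its block) being a bi-absorbing element of the quotient: indeed $o_S$ is bi-absorbing in $S$, and the image of a bi-absorbing element under a surjective homomorphism is bi-absorbing. By Proposition \ref{2.1.1}, a two-element semiring with a bi-absorbing element $w=o_S$ is isomorphic to either $\mathbb{S}_3$ or $\mathbb{S}_4$; these are distinguished by whether $a+a=a$ or $a+a=w$. Since $S$ is additively idempotent, so is $S/\varrho_S$, which forces the value on the non-absorbing element $a$ to satisfy $a+a=a$. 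Hence $S/\varrho_S\cong\mathbb{S}_3$.

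The argument is essentially a bookkeeping assembly of the preceding lemmas, so I do not expect a genuine obstacle; the only point requiring a small amount of care is verifying that the relevant structural properties (bi-absorbing element, additive idempotency) descend to the quotient and that the quotient indeed has two elements — that is, that $\varrho_S\neq S\times S$, equivalently $|S|\geq 3$. Wait: a two-element $S$ (with $w=o_S$, additively idempotent) is itself $\mathbb{S}_3$, and then $\varrho_S=\{(a,a),(w,w)\}=id_S$, so $S/\varrho_S=S\cong\mathbb{S}_3$ as well, and the statement still holds; for $|S|\geq 3$ the quotient is a genuine proper quotient. Either way $S/\varrho_S\cong\mathbb{S}_3$, which is what we want.
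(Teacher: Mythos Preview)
Your proof is correct and follows essentially the same route as the paper: invoke Lemma~\ref{2.6} to get $T+T\subseteq T$ and $TT\subseteq T$, apply Remark~\ref{remark_1}(iii)(4) to conclude $\varrho_S$ is a congruence, and then identify the two-element quotient via Proposition~\ref{2.1.1}. Your write-up is in fact more careful than the paper's, since you spell out why the quotient is $\mathbb{S}_3$ rather than $\mathbb{S}_4$ (additive idempotency descends) and you address the degenerate case $|S|=2$; note, however, the small slip in your opening sentence where you say ``part (2) of Remark~\ref{remark_1}(iii)'' but then correctly invoke case~(4) in the body of the argument.
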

\begin{proof}
 By Lemma \ref{2.6}, we have that $T+T, TT\sub T$. Hence the relation $\varrho_S=(T\times T)\cup \{(o_S,o_S)\}$ is a congruence on the semiring $S$, by Remark \ref{remark_1}(iii)(4). As $|S/\varrho_S|=2$, we obtain that $S/\varrho_S\cong \mathbb{S}_3$, by Proposition \ref{2.1.1}.
\end{proof}

\begin{lemma}\label{2.7}
 Assume that $S$ is additively idempotent and  $w=0_S$.  Then $\varrho_S$ is a congruence on $S$ and $S/\varrho_S\cong\mathbb{S}_2$.
\end{lemma}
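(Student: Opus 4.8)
The plan is to reduce the statement to the two inclusions $T+T\sub T$ and $TT\sub T$. Granting these, the partition of $S$ into the blocks $T$ and $\{w\}$ is respected by both operations — the singleton block causes no trouble, since $w+t=t\in T$ for $t\in T$, while $wS=Sw=\{w\}$, $w+w=w$ and $w\cdot w=w$ — so $\varrho_S$ is a congruence; this is the analogue of Remark~\ref{remark_1}(iii)(4) with $0_S$ in place of $o_S$. Then $S/\varrho_S$ is a two-element semiring that is again multiplicatively and additively idempotent and in which the class of $w$ is multiplicatively absorbing and additively neutral, i.e.\ a zero. By Proposition~\ref{2.1.1} the only two-element semirings with a zero are $\mathbb{S}_1$ and $\mathbb{S}_2$, and $\mathbb{S}_1$ is not additively idempotent, so $S/\varrho_S\cong\mathbb{S}_2$. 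As a preliminary (exactly as at the start of the proof of Lemma~\ref{2.6}) I would note that additive idempotency and Lemma~\ref{2.1}(iv) force $A(S)\sub\{w\}$, while $w\notin A(S)$ because $w=0_S$ is additively neutral and hence $SwS+S=\{w\}+S=S\neq\{w\}$; thus $A(S)=\emptyset$ and $B(S)=S$.

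The inclusion $T+T\sub T$ is the easy half and needs only that $w=0_S$ is additively neutral: then $(S,+)$ is a semilattice with least element $w$, so if $a+b=w$ for some $a,b\in T$ we would get $b\le a+b=w$, i.e.\ $b=w$, a contradiction. (Equivalently, additive idempotency gives $(a+b)+b=a+(b+b)=a+b$, whereas $(a+b)+b=w+b=b$ once $a+b=w$, again forcing $b=w$.)

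The inclusion $TT\sub T$ is the real obstacle, and the argument of Lemma~\ref{2.6} does not carry over: there one uses that $o_S$ is additively \emph{absorbing}, so that adjoining $o_S$ collapses all the auxiliary sums, whereas here $w=0_S$ is additively neutral. Instead I would combine multiplicative idempotency with Remark~\ref{remark_1}(vii): every $a\in T$ satisfies $a^2=a\neq w=0_S$, so $S$ has no proper divisors of zero, whence $ab=0_S=w$ is impossible for $a,b\in T$ — which is precisely $TT\sub T$. This is the one step that genuinely uses congruence-simplicity rather than mere bi-ideal-simplicity; indeed $\mathbb{S}_2\times\mathbb{S}_2$ is bi-ideal-simple by Remark~\ref{remark_1}(iv), multiplicatively and additively idempotent with zero $(w,w)$, yet $(a,w)(w,a)=(w,w)$, so for it $TT\not\sub T$ and $\varrho_S$ is not a congruence. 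I therefore expect proving $TT\sub T$ to be the main difficulty.
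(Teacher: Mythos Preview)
Your proof is correct and follows the paper's own argument almost verbatim: $T+T\sub T$ from additive idempotency and neutrality of $0_S$, then $TT\sub T$ via Remark~\ref{remark_1}(vii), after which Remark~\ref{remark_1}(iii)(4) and Proposition~\ref{2.1.1} finish. Your observation that Remark~\ref{remark_1}(vii) requires congruence-simplicity rather than mere bi-ideal-simplicity (witnessed by $\mathbb{S}_2\times\mathbb{S}_2$) is in fact sharper than the paper, which invokes that remark silently under the section's bi-ideal-simple standing hypothesis; the preliminary about $A(S)=\emptyset$ is correct but not needed anywhere in the argument.
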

\begin{proof}
 If $a,b\in S$ are such that $a+b=0_S$, then $a=a+0_S=a+a+b=a+b=0_S$ and, similarly, $b=0_S$. Henceforth $T+T\sub T$. Since $S$ is multiplicatively idempotent, it follows that $a^2=a\neq 0_S$ for every $a\in S$, $a\neq 0_S$. By Remark \ref{remark_1}(vii), we have $TT\sub T$. Hence, the relation $\varrho_S=(T\times T)\cup\{(0_S,0_S)\}$ is a congruence on the semiring $S$, by Remark \ref{remark_1}(iii)(4). Since $|S/\varrho_S|=2$, we obtain that $S/\varrho_S\cong \mathbb{S}_2$, by Proposition \ref{2.1.1}.
\end{proof}

\begin{proposition}
 Assume that either $S+S=\{w\}$ or $S$ is additively idempotent.  Then $\varrho_S$ is a congruence on $S$ and $S/\varrho_S$ is isomorphic to (just) one of the two-element semirings $\mathbb{S}_2$, $\mathbb{S}_3$, $\mathbb{S}_4$.
\end{proposition}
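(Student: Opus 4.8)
The plan is to read this Proposition as a bookkeeping statement that merely assembles Lemmas~\ref{2.4}, \ref{bi-absorbing} and~\ref{2.7} according to a two-fold case distinction. The organizing fact is Lemma~\ref{2.1}(i): the multiplicatively absorbing element $w$ is either a zero $0_S$ or a bi-absorbing element $o_S$. So I would first record this dichotomy, and then split on which of the two displayed hypotheses on $(S,+)$ holds.

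In the case $S+S=\{w\}$, Lemma~\ref{2.4} applies directly and yields $w=o_S$, that $\varrho_S$ is a congruence, and that $S/\varrho_S\cong\mathbb{S}_4$. In the case that $S$ is additively idempotent, I would invoke Lemma~\ref{2.1}(i) again and split once more: if $w=o_S$, then Lemma~\ref{bi-absorbing} gives that $\varrho_S$ is a congruence and $S/\varrho_S\cong\mathbb{S}_3$; if $w=0_S$, then Lemma~\ref{2.7} gives that $\varrho_S$ is a congruence and $S/\varrho_S\cong\mathbb{S}_2$. Thus in every branch $\varrho_S$ is a congruence on $S$ and the quotient is isomorphic to one of $\mathbb{S}_2,\mathbb{S}_3,\mathbb{S}_4$, as claimed.

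For the word ``just'' I would add one line: the semirings $\mathbb{S}_2,\mathbb{S}_3,\mathbb{S}_4$ are pairwise non-isomorphic by Proposition~\ref{2.1.1} (e.g.\ $\mathbb{S}_4$ is the only one of the three which is not additively idempotent, $\mathbb{S}_2$ has a zero element and $\mathbb{S}_3$ a bi-absorbing one), so the quotient is determined up to isomorphism. There is no genuine obstacle here, since all the work has already been done in the cited lemmas; the only point worth a remark is that the two hypotheses of the Proposition need not be mutually exclusive but their union is covered by the three cases above, and the degenerate overlap ``$S$ additively idempotent and $S+S=\{w\}$'' would force $a+a=a=w$ for all $a$, i.e.\ $|S|=1$, which is excluded by bi-ideal-simplicity.
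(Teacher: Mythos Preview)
Your proposal is correct and matches the paper's approach exactly: the paper's proof is the single line ``Combine \ref{2.4}, \ref{bi-absorbing} and \ref{2.7},'' which is precisely the case split you describe. Your extra remarks on the word ``just'' and on the vacuous overlap of the two hypotheses are fine additions but not needed for the argument.
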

\begin{proof}
 Combine \ref{2.4}, \ref{bi-absorbing} and \ref{2.7}.
\end{proof}

The following examples show that there are infinitely many finite  multiplicatively idempotent semirings that are \emph{bi-ideal-simple} (Examples \ref{ex_1} and \ref{2.10}) but neither congruence- nor ideal-simple. Similarly, there are infinitely many finite  multiplicatively idempotent semirings that are \emph{ideal-simple} (Example \ref{ex_3}) but not congruence-simple.

\begin{example}\label{ex_1} 
 Assume that $w=o_S\in S$ ($w=0_S\in S$, resp.) for the multiplicatively idempotent bi-ideal-simple semiring $S$. Now, set $P=S\cup\{z\}$, $z\not\in S$, $z=0_P$ ($z=o_P$, resp.). In this way, $P$ becomes a multiplicatively idempotent bi-ideal-simple semiring with a zero (a bi-absorbing element, resp.). Clearly, $|P|=|S|+1$, $P$ is bi-idempotent iff $S$ is so, and $P$ is commutative iff $S$ is so. On the other hand, $P$ is \emph{not} ideal-simple as the two-element set $\{z,w\}$ is always an ideal of $P$. Also, $P$ is \emph{not} congruence-simple, as, by Lemmas \ref{bi-absorbing} and \ref{2.7}, $P/\varrho_P\cong\mathbb{S}_2$ ($P/\varrho_P\cong\mathbb{S}_3$, resp.). Of course, we can start with the two-element semirings $S=\mathbb{S}_3,\mathbb{S}_4$ ($S=\mathbb{S}_1,\mathbb{S}_2$, resp.) and, continuing by means of the ``zig-zag method``, we arrive at examples of any finite size $\geq 3$ (see also \cite{vechtomov-petrov}).
\end{example}

\begin{example}\label{2.10}
The following five-element semiring $\mathbb{P}$ is constructed in \cite[Theorem 2.1]{cornish}:

\bigskip

\begin{center}
 \begin{tabular}{c}
$\mathbb{P}$\\
\begin{tabular}{ccc}
\begin{tabular}[t]{r|ccccc}%
%\hline
$+$ & $0$ & $1$ & $a$ & $b$ & $c$ \\\hline%\hline
$0$ & $0$ & $1$ & $a$ & $b$ & $c$  \\
$1$ & $1$ & $1$ & $c$ & $c$ & $c$  \\
$a$ & $a$ & $c$ & $a$ & $c$ & $c$  \\
$b$ & $b$ & $c$ & $c$ & $b$ & $c$  \\
$c$ & $c$ & $c$ & $c$ & $c$ & $c$  \\
%\hline
\end{tabular}
&&
\begin{tabular}[t]{r|ccccc}%
%\hline
$\cdot$ & $0$ & $1$ & $a$ & $b$ & $c$ \\\hline%\hline
$0$ & $0$ & $0$ & $0$ & $0$ & $0$  \\
$1$ & $0$ & $1$ & $a$ & $b$ & $c$  \\
$a$ & $0$ & $a$ & $a$ & $0$ & $a$  \\
$b$ & $0$ & $b$ & $0$ & $b$ & $b$  \\
$c$ & $0$ & $c$ & $a$ & $b$ & $c$  \\
%\hline
%\hline
\end{tabular}
\end{tabular}
\end{tabular} 
\end{center} 

\bigskip
 
The semiring $\mathbb{P}$ is commutative, bi-idempotent and has a zero element $0=0_\mathbb{P}$. This semiring  is subdirectly irreducible (where $\sigma=(J\times J)\cup id_\mathbb{P}$ for $J=\{c,1\}$ is the smallest congruence such that $\sigma\neq id_\mathbb{P}$), but neither congruence-simple  nor ideal-simple (e.g., the set $I=\{0,a\}$ is an ideal of $\mathbb{P}$). On the other hand, it is easy to verify that $\mathbb{P}$ is bi-ideal-simple.  

%Moreover, the element $1$ is multiplicatively neutral, $c$ is additively absorbing and $ab=0$.
\end{example}

\begin{example}\label{ex_3}
 Let $L(+)$ be a non-trivial semilattice. Define a multiplication on $L$ by $ab=b$ ($ab=a$, resp.) for all $a,b\in L$. Then $L(+,\cdot)$ becomes an ideal-simple bi-idempotent semiring. By a latter result in this paper (Theorem \ref{3.3}), if $|L|\geq 4$ then $L$ cannot be congruence-simple. 
\end{example}

\begin{remark}\label{remark_5}
Let $P$ be a \emph{commutative} multiplicatively idempotent semiring.

(i) If $P$ is ideal-simple then, according to the classification in \cite[Theorem 11.2]{simple_comm}, $P$ has  just two elements (and is isomorphic to one of the semirings $\mathbb{S}_1$, $\mathbb{S}_2$, $\mathbb{S}_3$, $\mathbb{S}_4$).

(ii) If $P$ is congruence-simple then it follows easily from \cite[Theorem 10.1]{simple_comm} that $P$ is also isomorphic to $\mathbb{S}_1$, $\mathbb{S}_2$, $\mathbb{S}_3$ and $\mathbb{S}_4$ (see also \cite[Lemma 3.1]{vechtomov-petrov}).

(iii) If $P$ is subdirectly irreducible then $P$ is bi-ideal-simple (and if $P$ has at least three elements then, by (i) and (ii), it is nor congruence-simple neither ideal-simple).

Indeed, by \cite[Theorem 3.1]{vechtomov-petrov}, $P$ has a unity $1\in P$ and there is $e\in P\setminus\{1\}$ such that for the smallest non-identical congruence $\tau$ on $P$ is $(1,e)\in\tau$. Now, let $I$ be a bi-ideal of $P$ such that $|I|\geq 2$. Then $\varrho=(I\times I)\cup\set{(a,a)}{a\in P}$ is a congruence on $P$ and $\varrho\neq id_P$. Hence $(1,e)\in\tau\sub\varrho$. Therefore $1\in I$ and we have $I=P$. 
\end{remark}

\section{Congruence-simple multiplicatively idempotent semirings with a multiplicatively absorbing element}

%In this section, let $S$ be a multiplicatively idempotent semiring. 

%\bigskip

% \begin{lemma}\label{2.3}
%  Assume that the multiplicative semigroup $S(\cdot)$ is ideal-simple. Then $b=bab$ and $ab\neq w$ for all  $a,b\in S\setminus\{w\}$.
% \end{lemma}
% \begin{proof}
%   For every $a\in S$, the set $SaS$ is an ideal of $S(\cdot)$ and $a,w\in SaS$. Assume that $a\neq w$. As $S(\cdot)$ is bi-ideal simple, we have $SaS=S$. If $b\in S$, then $b=cad$ for some $c,d\in S$ and $b=cad=c(ad)^2=(cad)ad=bad$. Hence $b=(ba)^2d=ba(bad)=bab$. 
%   
%   If $b\neq w$ then we obtain that $b(ab)=b\neq w$ and therefore, $ab\neq w$.
% \end{proof}

% \begin{theorem}\label{kepka}\cite[Theorem 2.1]{simple}\ %\cite[Lemma 8.8]{zeropotent}
%  Let $S$ be a congruence-simple semiring containing at least three elements. Then precisely one of the following cases holds:
%  \begin{enumerate}
%   \item[(i)] $S$ has a bi-absorbing element $o_S$, $S+S=\{o_S\}$ and the semigroup $S(\cdot)$ is congruence-simple. % \cite[Theorem 2.1, Corollary 7.2, Lemma 1.4]{simple} 
%   \item[(ii)] $S$ has a bi-absorbing element $o_S$, $S+S=S$ and $2a=o_S$ for every $a\in S$. 
%   Moreover, for every non-nilpotent element $a\in R$ it holds that the elements $a^{n}$, $n\in\N$, are pair-wisely different.
%   \item[(iii)] $S$ is additively cancellative.
%   \item[(iv)] $S$ is additively idempotent.
%  \end{enumerate}
% \end{theorem}

\begin{proposition}\label{3.1}
 Let $S$ be a multiplicatively idempotent congruence-simple semiring containing at least three elements. Then $S$ is additively idempotent (so that $S$ is bi-idempotent).
\end{proposition}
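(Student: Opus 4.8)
The goal is to show that a multiplicatively idempotent congruence-simple semiring $S$ with $|S|\geq 3$ satisfies $x+x=x$ for all $x$. The natural strategy is to exhibit a congruence built from the additive structure and use congruence-simplicity to force it to be one of the trivial ones, concluding that the ``doubling map'' $x\mapsto x+x$ is either the identity (which gives additive idempotency at once) or collapses everything. My plan is to consider the map $d:S\to S$, $d(x)=x+x=2x$. Since addition is commutative and associative and multiplication distributes, $d$ is an endomorphism of $(S,+)$, and because $(xy)+(xy) = x(y+y) = (x+x)y$ we also get $d(xy) = x\,d(y) = d(x)\,y$; in particular, using multiplicative idempotency, $d(x) = d(x^2) = x\,d(x) = d(x)\,x$, so every value $d(x)$ behaves well multiplicatively. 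From this one checks that the relation $\theta = \{(a,b) : d(a)=d(b)\}$, i.e. the kernel congruence of $d$, is a congruence on $S$: it is clearly an equivalence, it is compatible with $+$ since $d$ is additive, and compatible with $\cdot$ because $d(ac)=d(a)c$ and $d(ca)=c\,d(a)$ depend on $a$ only through $d(a)$.

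Next I would invoke congruence-simplicity: $\theta$ is either $\mathrm{id}_S$ or $S\times S$. If $\theta=\mathrm{id}_S$, then $d$ is injective; combined with $d$ being an additive endomorphism and with $d\circ d(x) = 2(2x) = (2x)+(2x)$, one argues that injectivity of $d$ together with multiplicative idempotency forces $d$ itself to be the identity — the cleanest route is to observe that $d(d(x)) = d(x)+d(x)$ and $d(x) = d(x)\cdot d(x)$ feed into $d(x^2+x^2) $-type identities, or more simply that $S$ being additively cancellative on the image of $d$ plus Remark \ref{remark_1}(vi) forces a Boolean ring structure, which for $|S|\ge 3$ is impossible if $S$ is to remain congruence-simple; so this case either yields $x+x=x$ directly or is excluded. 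If instead $\theta=S\times S$, then $d$ is constant, say $d(x)=w$ for all $x$; this $w$ satisfies $w = d(x) = d(x)\,y = d(xy)$ pattern... more to the point, $2x=w$ for every $x$, so $w$ is additively absorbing, and $w = 2w = w+w$, while $w = w\cdot w$ and for any $s$, $ws = (x+x)s = (xs)+(xs) = 2(xs) = w$, so $w$ is multiplicatively absorbing as well. Thus $w=o_S$ is bi-absorbing, and $S+S=\{o_S\}$. Now I would bring in the structural lemmas: $S$ is in particular bi-ideal-simple by Remark \ref{remark_1}(i), so Lemma \ref{2.4} applies and gives that $\varrho_S$ is a congruence with $S/\varrho_S\cong\mathbb{S}_4$. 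Since $|S|\geq 3$ we have $\mathrm{id}_S\neq\varrho_S\neq S\times S$, contradicting congruence-simplicity. Hence the case $\theta = S\times S$ cannot occur, and we are left with $d=\mathrm{id}_S$, i.e. $S$ is additively idempotent.

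The main obstacle I anticipate is the case $\theta=\mathrm{id}_S$: ruling it out or extracting additive idempotency from the mere injectivity of the doubling map is the delicate point, since injectivity of $d$ is equivalent to additive cancellativity and by Remark \ref{remark_1}(vi) this already forces $S$ to be a Boolean ring, hence (being congruence-simple) isomorphic to $\mathbb{Z}_2$ — but that has only two elements, contradicting $|S|\geq 3$. So in fact $\theta=\mathrm{id}_S$ is outright impossible under the hypothesis $|S|\geq 3$. Putting the two cases together: neither $\theta=\mathrm{id}_S$ nor $\theta=S\times S$ is tenable unless $d$ is already the identity, which means precisely that $x+x=x$ for all $x\in S$, and combined with multiplicative idempotency $S$ is bi-idempotent. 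The one technical wrinkle to double-check is that $\theta$ really is a congruence even without additive idempotency — but this only uses distributivity and $x^2=x$, both of which hold — and that the reduction to Lemma \ref{2.4} is legitimate, i.e. that bi-ideal-simplicity of $S$ is available, which is exactly Remark \ref{remark_1}(i).
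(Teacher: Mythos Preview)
Your overall strategy is exactly the paper's: both consider the congruence $\theta=\{(a,b):2a=2b\}$ (the paper calls it $\sigma$) and split according to whether $\theta=\mathrm{id}_S$ or $\theta=S\times S$. The execution of each case, however, has a real gap.

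For $\theta=\mathrm{id}_S$ you assert that injectivity of $d$ is ``equivalent to additive cancellativity'' and then invoke Remark~\ref{remark_1}(vi). But injectivity of $d$ only gives $2a=2b\Rightarrow a=b$, which is strictly weaker than $a+c=b+c\Rightarrow a=b$; Remark~\ref{remark_1}(vi) needs the latter, so that route does not go through. More fundamentally, your conclusion that $\theta=\mathrm{id}_S$ is ``outright impossible'' cannot be right: the very statement you are proving is that $d=\mathrm{id}_S$, which forces $\theta=\mathrm{id}_S$. The correct move is the one you merely gesture at: from $2a=(2a)^2=4a^2=4a=2(2a)$ and injectivity of $d$ one reads off $a=2a$ immediately. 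This case is where the conclusion is \emph{obtained}, not excluded.

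For $\theta=S\times S$ you correctly get that $w$ is multiplicatively absorbing, but ``$2x=w$ for every $x$, so $w$ is additively absorbing'' is not justified (you need Lemma~\ref{2.1}(i) to split into $w=0_S$ or $w=o_S$, and you never handle the first subcase). Even granting $w=o_S$, the claim $S+S=\{o_S\}$ does not follow: you only know $x+x=o_S$, not $x+y=o_S$ for $x\neq y$, so Lemma~\ref{2.4} cannot be invoked. The paper handles this differently: it first shows $S+S=S$ (since $S+S$ is a bi-ideal and $|S+S|=1$ is ruled out via Lemma~\ref{2.4}), then in the subcase $w=o_S$ writes an arbitrary $a$ as $b+c$, computes $ab+ca=(b+c)b+c(b+c)=2cb+b+c=o_S$, and concludes $a=a(b+c)a=a(ab+ca)a=o_S$, contradicting $|S|\geq 3$. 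The subcase $w=0_S$ is dispatched by Remark~\ref{remark_1}(vi), since then $S$ is genuinely a ring.
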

\begin{proof}
Firstly, the set $S+S$ is a bi-ideal of $S$. As the semiring $S$ is bi-ideal-simple, it follows that either $|S+S|=1$ or that $S+S=S$. 
 If $|S+S|=1$ then, by Lemma \ref{2.4}, there is a congruence $\varrho\neq id_S$ on the semiring $S$ such that $|S/\varrho|=2$, a contradiction with the congruence-simpleness of $S$. Hence $S+S=S$.

 Further, consider the equivalence $\sigma$ on $S$ defined as $(x,y)\in\sigma$ if and only if $2x=2y$. This equivalence, clearly, is a congruence on the semiring $S$. 
 
 We show that $\sigma=id_S$. Assume, for contrary, that $\sigma\neq id_S$. Since $S$ is congruence-simple, it follows that $\sigma=S\times S$. Hence there is $w\in S$ such that $2x=w$ for every $x\in S$. Obviously, the element $w$ is multiplicatively absorbing and therefore, by Lemma \ref{2.1}(i), we have either $w=0_S$ or $w=o_S$. 
If $w=0_S$ then $S$ is a ring and, by Remark  \ref{remark_1}(vi), we obtain that $S\cong\Z_2$, a contradiction with $|S|\geq 3$. 

Therefore we have $w=o_S$. Now, as $S+S=S$, for every $a\in S$ there are $b,c\in S$ such that $a=b+c$. Hence $ab+ca=(b+c)b+c(b+c)=2cb+b^2+c^2=o_S+b+c=o_S$. It follows that $a=a\cdot a\cdot a=a(b+c)a=aba+aca=a\cdot ab\cdot a+a\cdot ca\cdot a=a(ab+ca)a=ao_Sa=o_S$. We have obtained that $|S|=1$, a final contradiction.
 
Thus, we have shown that $\sigma=id_S$. This means that for every $x,y\in S$ the condition $2x=2y$ implies that $x=y$. Now, for every $a\in S$ it holds that $2a=(2a)^2=4a^2=2(2a)$. It follows that $a=2a$ and the semiring $S$ is therefore additively idempotent.    
 \end{proof}

\begin{theorem}\label{2.9}
 Let $S$ be  a multiplicatively idempotent congruence-simple semiring possessing a multiplicatively absorbing element. Then $S$ is isomorphic to one of the two-element semirings $\mathbb{S}_1$, $\mathbb{S}_2$, $\mathbb{S}_3$ or $\mathbb{S}_4$.
\end{theorem}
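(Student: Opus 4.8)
The plan is to reduce at once to the two-element case, where Proposition~\ref{2.1.1} already provides the classification, and to exclude every larger $S$ by producing a forbidden congruence. So I would suppose, for contradiction, that $|S|\geq 3$ and aim to exhibit a congruence on $S$ that is neither $id_S$ nor $S\times S$.

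First I would note that by Remark~\ref{remark_1}(i) a congruence-simple semiring is bi-ideal-simple, so $S$ meets the standing hypotheses of Section~2: it is multiplicatively idempotent, bi-ideal-simple, and has the multiplicatively absorbing element $w$. Since $|S|\geq 3$, Proposition~\ref{3.1} then applies and tells us that $S$ is additively idempotent. Moreover, Lemma~\ref{2.1}(i) leaves only the two possibilities $w=0_S$ or $w=o_S$.

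Now I would split into these two cases. If $w=o_S$, Lemma~\ref{bi-absorbing} gives that $\varrho_S$ is a congruence on $S$ with $S/\varrho_S\cong\mathbb{S}_3$; if $w=0_S$, Lemma~\ref{2.7} gives that $\varrho_S$ is a congruence with $S/\varrho_S\cong\mathbb{S}_2$. In either case $\varrho_S=(T\times T)\cup\{(w,w)\}$ with $T=S\setminus\{w\}$ of size at least two, so $\varrho_S\neq id_S$, while $\varrho_S\neq S\times S$ because $(w,t)\notin\varrho_S$ for every $t\in T$ (in the case $w=o_S$ this is precisely the statement of Remark~\ref{remark_1}(iii)). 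This contradicts congruence-simplicity, so $|S|=2$, and then Proposition~\ref{2.1.1} completes the proof.

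I expect essentially no obstacle to remain at this point: the substantive work has already been carried out in Section~2 (showing that $\varrho_S$ is a congruence under additive idempotency, for both $w=0_S$ and $w=o_S$) and in Proposition~\ref{3.1} (forcing additive idempotency as soon as $|S|\geq 3$). The only thing left to verify directly is the elementary observation that $\varrho_S$ is a proper, non-identity equivalence whenever $|S|\geq 3$, which is immediate from its definition.
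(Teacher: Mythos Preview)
Your proof is correct and follows essentially the same route as the paper's own argument: assume $|S|\geq 3$, use Remark~\ref{remark_1}(i) to get bi-ideal-simplicity, invoke Proposition~\ref{3.1} for additive idempotency, split according to Lemma~\ref{2.1}(i) into $w=o_S$ or $w=0_S$, and in each case apply Lemma~\ref{bi-absorbing} or Lemma~\ref{2.7} to produce the nontrivial proper congruence $\varrho_S$. The only difference is that you spell out explicitly why $\varrho_S$ is neither $id_S$ nor $S\times S$, which the paper leaves implicit.
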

\begin{proof}
By Lemma \ref{2.1}(i), $S$ has either a bi-absorbing element $o_S$ or a zero element $0_S$. Further, proceeding by contradiction, assume that $|S|\geq 3$. Then, by Proposition \ref{3.1}, $S$ is bi-idempotent. By Remark  \ref{remark_1}(i), the semiring $S$ is bi-ideal-simple. Therefore, by Propositions \ref{bi-absorbing} and \ref{2.7}, we obtain that there is a congruence $\varrho$ on $S$ such that $\varrho\neq id_S$ and $|S/\varrho|=2$. This is a contradiction with the fact that $S$ is congruence-simple.
 
 We may therefore assume that $|S|=2$. The rest follows immediately from Proposition \ref{2.1.1}.
\end{proof}

\begin{corollary}\label{bi-ideal-simple}
 Let $S$ be a multiplicatively idempotent semiring possessing  a bi-absorbing element $w$.  If 
 \begin{enumerate}
 \item either $w=o_S$ and $S$ is bi-ideal-simple 
 \item or $w=0_S$ and $S$ is ideal-simple
 \end{enumerate}
 then $\varrho_S$ is the greatest non-trivial congruence on $S$ (i.e., the congruence $\varrho_S$ is the unique co-atom of the congruence lattice of $S$.)
\end{corollary}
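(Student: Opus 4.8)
The element $w$ here is multiplicatively absorbing (equal to $o_S$ in case (1) and to $0_S$ in case (2)), and in both cases $S$ is bi-ideal-simple: directly in (1), and by Remark~\ref{remark_1}(i) in (2). We may assume $|S|\ge 3$, the cases $|S|\le 2$ being trivial; then $id_S\ne\varrho_S\ne S\times S$ by Remark~\ref{remark_1}(iii). The plan is to establish two facts: (a) $\varrho_S$ is a congruence on $S$, and (b) every congruence $\varrho\ne S\times S$ satisfies $\varrho\subseteq\varrho_S$. Granting both, $\varrho_S$ is the unique maximal element of $\mathrm{Con}(S)$ below the top, i.e.\ the unique co-atom, and in particular the greatest non-trivial congruence.

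Part (b) is routine. Let $\varrho$ be a congruence with $\varrho\ne S\times S$ and let $I=\{a\in S\mid(a,w)\in\varrho\}$ be the $\varrho$-class of $w$. By Remark~\ref{remark_1}(ii), $I$ is a bi-ideal of $S$ in case (1) and an ideal of $S$ in case (2), and $I\ne S$ since $\varrho\ne S\times S$; bi-ideal-simplicity (resp.\ ideal-simplicity) then forces $|I|=1$, so $I=\{w\}$. Hence no pair in $\varrho$ other than $(w,w)$ involves $w$, and therefore $\varrho\subseteq(T\times T)\cup\{(w,w)\}=\varrho_S$.

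For part (a) in case (2) ($w=0_S$): the set $E=\{x\in S\mid x+x=x\}$ of additively idempotent elements is easily seen to be an ideal of $S$ containing $0_S$, so by ideal-simplicity $E=\{0_S\}$ or $E=S$. Since $(2x)^2=2x$ forces $4x=2x$, every element $2x$ lies in $E$; hence $E=\{0_S\}$ would give $x+x=0_S$ for all $x$, making $S$ a ring, and then Remark~\ref{remark_1}(vi) would force the multiplicatively idempotent ideal-simple ring $S$ to be isomorphic to $\Z_2$, contradicting $|S|\ge 3$. Thus $E=S$, $S$ is additively idempotent, and Lemma~\ref{2.7} gives that $\varrho_S$ is a congruence. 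In case (1) ($w=o_S$) consider $\phi\colon S\to S$, $\phi(x)=x+x$; using multiplicative idempotency (which as above yields $4x=2x$) one checks that $\phi$ is a semiring endomorphism with $\phi\circ\phi=\phi$, so its image is the subsemiring $E=\{x\mid x+x=x\}$, and $\sigma=\ker\phi$ is a congruence whose $w$-class $\{x\mid 2x=o_S\}$ is, by Remark~\ref{remark_1}(ii), a bi-ideal of $S$, hence equal to $\{o_S\}$ or to $S$. If it equals $S$, then $2x=o_S$ for all $x$; since $S+S$ is a bi-ideal containing $o_S$ (cf.\ the proof of Proposition~\ref{3.1}), either $S+S=S$, which is impossible because the computation in that proof would then give $|S|=1$, or $S+S=\{w\}$, and then Lemma~\ref{2.4} gives that $\varrho_S$ is a congruence. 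Otherwise $\{x\mid 2x=o_S\}=\{o_S\}$, $\phi$ is non-constant, and $E=\phi(S)$ is a non-trivial homomorphic image of the bi-ideal-simple semiring $S$, hence itself bi-ideal-simple; being additively and multiplicatively idempotent with bi-absorbing element $o_S$, it meets the hypotheses of Lemma~\ref{bi-absorbing}, so $\varrho_E$ is a congruence on $E$. Pulling $\varrho_E$ back along $\phi$ and using $\phi^{-1}(o_S)=\{o_S\}$ one gets exactly $\phi^{-1}(\varrho_E)=\varrho_S$, so $\varrho_S$ is a congruence on $S$.

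The main obstacle is part (a) in case (1) when $S$ is \emph{not} additively idempotent: then $S+S=S$ (it is a bi-ideal containing $o_S$, and the alternative $S+S=\{w\}$ is the easy subcase), so neither Lemma~\ref{2.4} nor Lemma~\ref{bi-absorbing} applies to $S$ itself. The device that resolves this is to pass to the additively idempotent retract $E=\phi(S)$, which inherits bi-ideal-simplicity, and to recover $\varrho_S$ as $\phi^{-1}(\varrho_E)$. The verifications to perform carefully are that $\phi(x)=x+x$ is a semiring homomorphism with $\phi\circ\phi=\phi$, that a non-trivial homomorphic image of a bi-ideal-simple semiring is again bi-ideal-simple, and that the $w$-class of $\ker\phi$ reduces to $\{w\}$ in the relevant subcase.
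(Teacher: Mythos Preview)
Your proof is correct, but the route differs substantially from the paper's.

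The paper does \emph{not} first establish that $\varrho_S$ is a congruence and then bound all proper congruences by it. Instead, given any congruence $\sigma\ne S\times S$, it uses Zorn's lemma to enlarge $\sigma$ to a congruence $\sigma'$ maximal with $(a,w)\notin\sigma'$ for some fixed $a\in T$, observes (via the bi-ideal/ideal argument you also use) that the $w$-block of $\sigma'$ is $\{w\}$ and that $\sigma'$ is a maximal congruence, so $S/\sigma'$ is congruence-simple; then Theorem~\ref{2.9} forces $|S/\sigma'|=2$, whence $\sigma'=\varrho_S$. Thus both ``$\varrho_S$ is a congruence'' and ``$\sigma\subseteq\varrho_S$'' fall out simultaneously from the classification Theorem~\ref{2.9}.

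Your approach decouples the two tasks. Part~(b) is the same bi-ideal/ideal computation, but done once and for all without Zorn. Part~(a) is where the real work lies, and your device---passing in case~(1) to the additively idempotent retract $E=\phi(S)$ under $\phi(x)=2x$ and pulling back $\varrho_E$---is a genuinely different idea that bypasses Theorem~\ref{2.9} entirely, relying only on the earlier Lemmas~\ref{2.4}, \ref{bi-absorbing}, \ref{2.7} together with the computation extracted from the proof of Proposition~\ref{3.1}. What you gain is a more self-contained argument that does not invoke congruence-simplicity of a quotient or the full force of Theorem~\ref{2.9}; what the paper's proof gains is brevity and uniformity (no case split on additive idempotency, no retract construction). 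Both are valid; the paper's is shorter because it can lean on Theorem~\ref{2.9}, which has already absorbed the relevant case analysis.

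One small remark: your citation of Remark~\ref{remark_1}(iii) for $id_S\ne\varrho_S\ne S\times S$ is formally stated there only for $w=o_S$, but the assertion depends only on $|T|$ and holds verbatim for $w=0_S$ as well.
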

\begin{proof}
 Let $\sigma\neq S\times S$ be a congruence on the semiring $S$. Then there is $a\in T=S\setminus\{w\}$ such that $(a,w)\notin\sigma$. By Zorn's lemma there is a congruence $\sigma'$ of $S$ that is maximal with respect to the property $(a,w)\notin\sigma'$ and $\sigma\sub\sigma'$. We claim that $\sigma'$ is a maximal congruence on $S$.

Indeed, if $\tau$ is a congruence on $S$ such that $\sigma'\sub\tau$ and $\sigma'\neq \tau$, then $a,w\in I=\set{x\in S}{ (x,w)\in\tau}$. By Remark \ref{remark_1}(ii) the set  $I$ is a bi-ideal (an ideal, resp.) of $S$ and $|I|\geq 2$. Since $S$ is bi-ideal-simple  (ideal-simple, resp.), we have that $I=S$ and therefore $\tau=S\times S$.

Similarly, by Remark \ref{remark_1}(ii), the set $J=\set{x\in S}{ (x,w)\in\sigma'}$ is a bi-ideal (an ideal) of $S$. Since the factor-semiring $S/\sigma'$ is non-trivial, we have that $J\neq S$. As the semiring $S$ is bi-ideal-simple  (ideal-simple, resp.), it follows that $J=\{w\}$. Now, since $\sigma'$ is a maximal congruence, we obtain, by Theorem \ref{2.9}, that the semiring $S/\sigma'$ has precisely two elements. Therefore the set $T=S\setminus\{w\}$ is a block of the congruence $\sigma'$ and $\sigma\sub\sigma'=(T\times T)\cup\{(w,w)\}=\varrho_S$. 
It means that $\varrho_S$ is the only maximal congruence on the semiring $S$ and every proper congruence on $S$ is contained in $\varrho_S$. 
\end{proof}

\begin{remark}\label{3.4.0} 
Let $S$ be a semiring as in Corollary \ref{bi-ideal-simple} fulfilling the condition (1) and $T=S\setminus\{o_S\}$.
 
 If $S/\varrho_S\cong\mathbb{S}_3$, then, by the definition of $\mathbb{S}_3$, the set $T$ is a subsemiring of $S$. If $|T|\geq 2$ then $T$ is bi-ideal-simple and has no bi-absorbing element (otherwise, for $o_T\in T$ the set $\{o_S, o_T\}$ is a bi-ideal of $S$, a contradiction).
 
 If $S/\varrho_S\cong\mathbb{S}_4$, then $S+S=\{o_S\}$ and the semiring $S$ is ideal-simple. The band $S(\cdot)$ is ideal-simple as well. Besides, $TT\sub T$ and the band $T(\cdot)$ is ideal-simple and rectangular (see Remark \ref{idempotent}). If $|T|\geq 2$ then the semigroup $T(\cdot)$ has no multiplicatively absorbing element (otherwise, for such an element $w'\in T$ the set $\{o_S, w'\}$ is a bi-ideal of $S$, a contradiction).
 \end{remark}

\begin{remark}\label{3.5}
Let $S$ be a semiring as in Corollary \ref{bi-ideal-simple} fulfilling the condition (2) and $T=S\setminus\{0_S\}$.

If $S/\varrho_S\cong\mathbb{S}_1$ then $S\cong\mathbb{S}_1$. 
Indeed, we have that $T+T\sub\{0_S\}$.  Hence for all $a,b\in T$ it holds that $a+a=0_S=a+b$. Therefore $a=a+0_S=a+a+b=0_S+b=b$ and it follows that $|T|=1$ and $S\cong\mathbb{S}_1$.

If $S/\varrho_S\cong\mathbb{S}_2$ then the set  $T$ is a subsemiring of $S$. If $|T|\geq 2$ then $T$ is ideal-simple and has no multiplicatively absorbing element (otherwise, for such an element $w'\in T$ the set $\{0_S, w'\}$ is an ideal of $S$, a contradiction).
 \end{remark}

\section{Finite congruence-simple multiplicatively idempotent semirings}

For our further consideration let $L(+)$ be a finite semilattice with the greatest element $1$. Denote by $End(L)(+,\cdot)$ the semiring of all endomorphisms of the semilattice $L$. For $\varphi,\psi\in End(L)$ the operations are defined as follows $(\varphi+\psi)(x)=\varphi(x)+\psi(x)$ and $(\varphi\cdot\psi)(x)=\varphi(\psi(x))$ for every $x\in L$.

If, moreover, $L$ has the least element $0$ (i.e., $L$ is a lattice), we set $End_0(L)=\set{\varphi\in End(L)}{\varphi(0)=0}$. Clearly, $End_0(L)$ is a subsemiring of $End(L)$.

\begin{proposition}\label{3.0}
 Let $S$ be a multiplicatively idempotent semiring without a multiplicatively absorbing element. If $|S|=2$ then $S$ is bi-idempotent and isomorphic to precisely one of the following semirings $\mathbb{S}_5$ or $\mathbb{S}_6$. Moreover, $\mathbb{S}_6^{op}= \mathbb{S}_5$.

\bigskip

\begin{center}
\begin{tabular}[t]{cc}%
\tabulkab{$\mathbb{S}_5$}{$a$}{$w$}{$w$}{$w$}{$a$}{$a$}{$w$}{$w$} &
\tabulkab{$\mathbb{S}_6$}{$a$}{$w$}{$w$}{$w$}{$a$}{$w$}{$a$}{$w$}\\
\end{tabular}
\end{center}
\end{proposition}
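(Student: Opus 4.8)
The plan is to enumerate all possible semiring structures on a two-element set $S=\{w,a\}$ that are multiplicatively idempotent and have no multiplicatively absorbing element, and to check that exactly two survive. First I would note that multiplicative idempotency forces $w\cdot w=w$ and $a\cdot a=a$, so the multiplication is pinned down except for the two ``off-diagonal'' products $w\cdot a$ and $a\cdot w$, each of which lies in $\{w,a\}$. The constraint that $S$ has no multiplicatively absorbing element rules out the combinations $w\cdot a=a\cdot w=w$ (which would make $w$ multiplicatively absorbing) and $w\cdot a=a\cdot w=a$ (which would make $a$ multiplicatively absorbing, since then $a\cdot w=a=w\cdot a$, i.e. every product equals the non-diagonal value... actually here $a$ would be absorbing because $sa=a$ and $as=a$ for all $s$). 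So the only surviving multiplications are the two ``rectangular band'' ones: either $xy=y$ for all $x,y$ (left projection onto the second coordinate, in the notation of Example \ref{ex_3}) or $xy=x$ for all $x,y$; these are opposite to each other.

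Next I would handle the addition. Commutativity leaves $w+w$, $a+a$, and $w+a=a+w$ free, each in $\{w,a\}$, giving eight candidates; associativity of $+$ must be imposed. I would split on the value of $w+a$. One then checks which of these addition tables is compatible, via the two distributive laws, with each of the two multiplication tables found above. The distributive law $x(y+z)=xy+xz$ with the projection multiplication $xy=y$ gives $y+z=y+z$ trivially, but the other law $(y+z)x=yx+zx$ becomes $x=x+x$, forcing additive idempotency; symmetrically for the opposite multiplication. So in both surviving cases $S$ must be additively idempotent, hence the addition is a semilattice operation on $\{w,a\}$, of which there are exactly two (ordered $w<a$ or $a<w$); one of these two choices will be excluded by re-checking distributivity or by the absorbing-element hypothesis, leaving a unique addition for each multiplication. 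Matching the resulting tables against the displayed $\mathbb{S}_5$ and $\mathbb{S}_6$, and observing that swapping left/right in the multiplication interchanges the two, yields $\mathbb{S}_6^{op}=\mathbb{S}_5$ and completes the proof.

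I expect the only mildly delicate point to be bookkeeping: making sure every one of the (at most) sixteen addition–multiplication pairs is either realized as $\mathbb{S}_5$/$\mathbb{S}_6$ or eliminated by one of associativity, distributivity, multiplicative idempotency, or the no-absorbing-element condition, and that $\mathbb{S}_5\not\cong\mathbb{S}_6$ (this is clear since in $\mathbb{S}_5$ one has $w\cdot a=w$ while $a\cdot w=a$ forces the isomorphism to fix both elements, yet the multiplication tables differ — more simply, $\mathbb{S}_5$ has a left but not right multiplicatively neutral element pattern distinct from $\mathbb{S}_6$). The bi-idempotency claim then comes for free from the distributivity argument above. None of this requires real ingenuity; it is a finite case analysis, and the main work is organizing it cleanly rather than overcoming a genuine obstacle.
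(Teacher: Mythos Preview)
Your enumeration strategy is sound and your derivation of additive idempotency via distributivity is correct: with $xy=y$ (respectively $xy=x$), the law $(y+z)x=yx+zx$ (respectively $x(y+z)=xy+xz$) forces $x=x+x$. However, the next step contains an error. You write that ``one of these two choices [of semilattice addition] will be excluded by re-checking distributivity or by the absorbing-element hypothesis.'' In fact neither is excluded: once the multiplication is a one-sided projection and the addition is any idempotent commutative operation, both distributive laws hold automatically (one side gives $y+z=y+z$, the other gives $x=x+x$), and the no-absorbing-element hypothesis concerns only the multiplication, which is already fixed. So for each of the two multiplications you get \emph{two} valid semirings, not one. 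The point you are missing is that these two are isomorphic via the swap $a\leftrightarrow w$: the projection multiplication $xy=y$ is invariant under relabelling, and the swap interchanges the two semilattice orders. Once you observe this, you may simply declare $a<w$ without loss of generality and you are left with exactly $\mathbb{S}_5$ and $\mathbb{S}_6$.

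By comparison, the paper organises the case analysis in the reverse order and thereby avoids this wrinkle. It first proves additive idempotency directly: if $a+a\neq a$ then $a+a=w$, whence $w=a(a+a)=aw$ and symmetrically $w=wa$, so $w$ is multiplicatively absorbing, a contradiction; likewise $w+w=w$. With additive idempotency in hand, one may assume $a<w$ up to isomorphism, which pins down the addition completely; then only $a\cdot w$ and $w\cdot a$ remain free, and the no-absorbing-element hypothesis forces $a\cdot w\neq w\cdot a$, yielding exactly the two tables. Your route reaches the same destination, but the paper's ordering (addition first, then multiplication) makes the ``up to isomorphism'' reduction cleaner.
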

\begin{proof}
Let $S=\{a,w\}$ and $a\neq w$. Assume, for contrary, that $a+a\neq a$. Then $w=a+a=a^2+a^2=a(a+a)=aw$ and, similarly, $w=wa$. Since $w=w^2$, the element $w$ is multiplicatively absorbing, a contradiction. Hence $a+a=a$ and, analogously, $w+w=w$. Therefore, $S$ is additively idempotent.  

Further, we may assume without loss of generality that $a<w$. Both the operations in $S$ are now determined up to the case $a\cdot w$ and $w\cdot a$. Since $S$ has no multiplicatively absorbing element, it follows that $a\cdot w\neq w\cdot a$. All the possibilities are now represented by the cases $\mathbb{S}_5$ and $\mathbb{S}_6$. 
\end{proof}

The following assertion is easy to verify.

\begin{proposition}\label{3.2}
Let $L=\{0,1\}$ be a semilattice with $0<1$. Then $End(L)=\{a,b,w\}$, where $a(x)=0$, $b(x)=x$ and $w(x)=1$ for every $x\in L$. 

The semirings  $\mathbb{S}_7=End(L)$ and $\mathbb{S}_8=End(L)^{op}$ have three elements, are bi-idempotent, congruence-simple  and are without a multiplicatively absorbing element. These two semirings are non-isomorphic and also not ideal-simple. The operations are as follows.

\bigskip

\begin{center}
 \begin{tabular}{c}
$\mathbb{S}_7$\\
\begin{tabular}{ccc}
\begin{tabular}[t]{r|ccc}%
%\hline
$+$ & $a$ & $b$ & $w$ \\\hline%\hline
$a$ & $a$ & $b$ & $w$  \\
$b$ & $b$ & $b$ & $w$  \\
$w$ & $w$ & $w$ & $w$  \\
%\hline
%\hline
\end{tabular}
&&
\begin{tabular}[t]{r|ccc}%
%\hline
$\cdot$ & $a$ & $b$ & $w$ \\\hline%\hline
$a$ & $a$ & $a$ & $a$  \\
$b$ & $a$ & $b$ & $w$  \\
$w$ & $w$ & $w$ & $w$  \\
%\hline
%\hline
\end{tabular}
\end{tabular}
\end{tabular} 
\end{center}

\bigskip

\begin{center}
 \begin{tabular}{c}
$\mathbb{S}_8$\\
\begin{tabular}{ccc}
\begin{tabular}[t]{r|ccc}%
%\hline
$+$ & $a$ & $b$ & $w$ \\\hline%\hline
$a$ & $a$ & $b$ & $w$  \\
$b$ & $b$ & $b$ & $w$  \\
$w$ & $w$ & $w$ & $w$  \\
%\hline
\end{tabular}
&&
\begin{tabular}[t]{r|ccc}%
%\hline
$\cdot$ & $a$ & $b$ & $w$ \\\hline%\hline
$a$ & $a$ & $a$ & $w$  \\
$b$ & $a$ & $b$ & $w$  \\
$w$ & $a$ & $w$ & $w$  \\
%\hline
%\hline
\end{tabular}
\end{tabular}
\end{tabular} 
\end{center} 

\bigskip

\end{proposition}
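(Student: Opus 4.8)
The plan is to determine $End(L)$ by brute force, read off the two multiplication tables, and then check each asserted property directly on these small tables.

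First I would enumerate the self-maps of $L=\{0,1\}$. Such a map $\varphi$ is pinned down by $(\varphi(0),\varphi(1))\in\{0,1\}^2$, and the only nontrivial instance of the homomorphism condition $\varphi(x+y)=\varphi(x)+\varphi(y)$ is $\varphi(1)=\varphi(0)+\varphi(1)$, i.e.\ $\varphi(0)\le\varphi(1)$. This rules out exactly the transposition, leaving the constant map $a$ with value $0$, the identity $b$, and the constant map $w$ with value $1$, so $End(L)=\{a,b,w\}$ has three elements. Computing $\varphi+\psi$ pointwise and $\varphi\cdot\psi=\varphi\circ\psi$ then reproduces the displayed tables for $\mathbb{S}_7$, and transposing the multiplication table gives those of $\mathbb{S}_8=End(L)^{op}$. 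Reading off the diagonals shows $\mathbb{S}_7$ and $\mathbb{S}_8$ are bi-idempotent, and that the addition of each is the chain $a<b<w$.

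Next I would dispatch the structural claims. For the absence of a multiplicatively absorbing element I would note that in $\mathbb{S}_7$ both $a$ and $w$ are left zeros (a constant endomorphism is a left zero for composition); a multiplicatively absorbing element would be a right zero, hence would equal every left zero when composed with one on either side, forcing $a=w$. The case of $\mathbb{S}_8$ is dual (there $a$ and $w$ are right zeros), or one can simply inspect the tables. For ideal-simplicity I would check that $\{a,w\}$, the set of constant endomorphisms, is closed under $+$ and absorbs multiplication from both sides in each of $\mathbb{S}_7$ and $\mathbb{S}_8$, so it is a two-element ideal and neither semiring is ideal-simple. For non-isomorphism I would observe that $\mathbb{S}_7$ has two left zeros and no right zero while $\mathbb{S}_8$ has two right zeros and no left zero, and that an isomorphism carries left zeros to left zeros.

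The main bookkeeping is congruence-simplicity. A three-element set carries exactly three equivalences other than $id$ and the full relation, each with a single two-element block, so for $\mathbb{S}_7$ I would test these three. The block $\{a,b\}$ fails because $a\cdot w=a$ while $b\cdot w=w$; the block $\{a,w\}$ fails because $a+b=b$ while $w+b=w$; and the block $\{b,w\}$ fails because $b\cdot a=a$ while $w\cdot a=w$. Hence $\mathbb{S}_7$ is congruence-simple. For $\mathbb{S}_8$ I would either repeat this symmetric computation or invoke the fact that a semiring and its opposite have the same congruences --- the requirement that $a\,\varrho\,b$ imply both $ac\,\varrho\,bc$ and $ca\,\varrho\,cb$ is unchanged when the product is reversed --- so $\mathbb{S}_8$ is congruence-simple as well. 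I expect this last step to be the one most prone to slips, even though every verification in it is mechanical.
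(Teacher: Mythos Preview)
Your proposal is correct and complete. The paper offers no proof beyond the sentence ``The following assertion is easy to verify,'' so your explicit brute-force verification---enumerating the four self-maps of $\{0,1\}$, discarding the non-monotone one, computing the tables, and then ruling out the three nontrivial proper equivalences one by one---is exactly the routine check the authors leave to the reader.
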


\begin{remark}
 Notice that the semirings $\mathbb{S}_5$, $\mathbb{S}_6$, $\mathbb{S}_7$ and $\mathbb{S}_8$ have an additively neutral element $a$. Besides, the semirings $\mathbb{S}_7$ and $\mathbb{S}_8$ have a multiplicatively neutral element $b$ and none of these two semirings is ideal-simple. Finally, notice that the element $w$ is left multiplicatively absorbing in $\mathbb{S}_5$ and $\mathbb{S}_7$ and right multiplicatively absorbing in $\mathbb{S}_6$ and $\mathbb{S}_8$.
\end{remark}

 \begin{theorem}\cite[Theorems 5.1 and 5.3]{zumbragel}\label{classification}
 Let $S$ be a finite bi-idempotent congruence-simple semiring with the greatest element $w\in S$ (with respect to the addition) and $|S|\geq 3$. 
 \begin{enumerate}
  \item[(i)] Let $w$ be neither left nor right multiplicatively absorbing. Then there is a finite (semi)lattice $L$ (with the least element $0$ and the greatest element $1$) such that $S$ is isomorphic to a subsemiring $R$ of $End_{0}(L)$. Further, for every $a,b\in L$ there is $e_{a,b}\in R$ such that  for every $x\in L$ is $e_{a,b}(x)=0$ if $x\leq a$ and $e_{a,b}(x)=b$ otherwise.
  %\item[(ii)] Let $w$ be right but not left multiplicatively absorbing and $|S|\geq 3$. Then there is a finite lattice $L$ such that $S$ is isomorphic to a subsemiring $R$ of $End_{0,1}(L)$ and for every $a,b\in L$ there is $e_{a,b}\in R$ such that $$e_{a,b}(x)=\begin{cases}0, \ x\leq a\\ b,\ \text{otherwise.} \end{cases}$$
  \item[(ii)] Let $w$ be left but not right multiplicatively absorbing. Then there is a finite semilattice $L$ (with the greatest element $1$) such that $S$ is isomorphic to a subsemiring $R$ of $End(L)$. Further, for every $a\in L$ and $b\in L\setminus\{1\}$ there is $f\in R$ such that for every $x\in L$ is $f(x)=b$ if $x\leq a$ and $f(x)>b$ otherwise.
 \end{enumerate}
\end{theorem}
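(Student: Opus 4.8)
This is \cite[Theorems~5.1 and 5.3]{zumbragel}; I would organise the proof as follows. Since $|S|\ge 3$, Proposition~\ref{3.1} makes $S$ bi-idempotent, so $(S,+)$ is a finite join-semilattice, ordered by $x\le y\iff x+y=y$, whose greatest element is $w=\sum_{s\in S}s$. By left distributivity each left translation $\lambda_a\colon x\mapsto ax$ is an endomorphism of $(S,+)$, and by associativity and right distributivity $a\mapsto\lambda_a$ is a semiring homomorphism $S\to End(S,+)$ with $\lambda_{ab}=\lambda_a\cdot\lambda_b$ and $\lambda_{a+b}=\lambda_a+\lambda_b$. Its kernel is a congruence, hence (by congruence-simpleness) $id_S$ or $S\times S$; in the second case $ax=bx$ for all $a,b,x$, so, taking $b=x$, $ax=x^2=x$ for all $a,x$, i.e.\ multiplication is the second projection, and then every semilattice congruence of $(S,+)$ is a semiring congruence of $S$ --- but a finite semilattice with at least three elements has a proper nontrivial one (collapse $w$ with a coatom), a contradiction. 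Hence the representation is faithful and $S$ is (isomorphic to) a subsemiring $R$ of $End(S,+)$.

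Next one cuts $(S,+)$ down and reads off the shape of $R$. By Theorem~\ref{2.9} the congruence-simple $S$ has no multiplicatively absorbing element, so its top $w$ is left absorbing, right absorbing, or neither; replacing $S$ by $S^{op}$ (which turns left translations into right translations) we may assume $w$ is not right absorbing, landing in case (i) if $w$ is also not left absorbing and in case (ii) if $w$ is left absorbing --- the latter meaning exactly that $\lambda_w$ is constant onto the top. One then passes to a faithful action of $S$ on a suitable finite \emph{simple} $S$-semimodule $L$; being a finite join-semilattice, $L$ has a greatest element $1$. In case (i) one moreover arranges that $L$ has a least element $0$ fixed by all of $R$, so that $R\le End_0(L)$; in case (ii) no such normalisation is available and one only gets $R\le End(L)$. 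Finiteness of $L$ is automatic, $S$ being finite.

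The core of the argument is producing the distinguished step endomorphisms: in case (i), for all $a,b\in L$, an element $e_{a,b}\in R$ acting by $0$ on $\{x\le a\}$ and by $b$ elsewhere; in case (ii), for $a\in L$ and $b\in L\setminus\{1\}$, an element $f\in R$ acting by $b$ on $\{x\le a\}$ and by something $>b$ elsewhere. I would obtain them by a density/separation argument in the spirit of the Jacobson density theorem: fix a prescribed two-valued ``pattern'' supported on the principal ideal $\{x\in L:x\le a\}$, let $I\subseteq S$ be the set of elements of $S$ whose action is dominated by (or compatible with) that pattern on this ideal, and verify that $I$ --- or a set of pairs extracted from it --- is a bi-ideal of $S$, respectively the block of a congruence of $S$; since $S$ is congruence-simple, hence bi-ideal-simple by Remark~\ref{remark_1}(i), this set must be all of $S$, so the desired element exists. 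One uses repeatedly that the band $S(\cdot)$ satisfies $eae=e$ whenever $e\in SaS$ (Remark~\ref{idempotent}), which lets one ``cut'' a translation along a principal ideal and assemble two-valued maps as sums of such cuts.

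The main obstacle is precisely this last step: carrying out the inductive construction of the $e_{a,b}$ (resp.\ $f$) while keeping track of the order on $L$ and of the side on which $w$ absorbs. It is an idempotent-semiring analogue of the Jacobson density theorem and is the delicate, computational heart of \cite{zumbragel}, the faithful representation and the case split being comparatively soft.
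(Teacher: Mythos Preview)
The paper does not prove this theorem at all: it is quoted verbatim from \cite[Theorems~5.1 and 5.3]{zumbragel} and used as a black box in the proof of Proposition~\ref{non-bi-absorbing}. There is therefore no ``paper's own proof'' to compare against; your write-up goes well beyond what the paper does, and what you outline is broadly in the spirit of the Kendziorra--Zumbr\"agel argument (faithful left-regular representation into $End$ of a semilattice, reduction to a simple $S$-semimodule, then a density-type construction of the step maps).

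Two small remarks on your sketch. First, the opening appeal to Proposition~\ref{3.1} is redundant: bi-idempotency is already a hypothesis of the theorem. Second, your invocation of Theorem~\ref{2.9} is likewise unnecessary here, since cases (i) and (ii) are \emph{hypotheses} on $w$, not conclusions of an exhaustive case split; the theorem as stated does not claim that these two cases cover everything. Finally, as you yourself flag, the substantive work --- passing to a suitable simple semimodule $L$ and actually producing the elements $e_{a,b}$ and $f$ inside $R$ --- is not carried out in your sketch; that construction is genuinely delicate and is the content of \cite{zumbragel}. So your proposal is an accurate roadmap, but not a proof, and the paper makes no pretence of supplying one either.
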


\begin{proposition}\label{non-bi-absorbing}
 Let $S$ be a finite bi-idempotent congruence-simple semiring with the greatest element $w\in S$ (with respect to the addition). If $w$ is not a bi-absorbing element then $S$ is isomorphic to one of the four two/three-element semirings $\mathbb{S}_5$, $\mathbb{S}_6$, $\mathbb{S}_7$ or $\mathbb{S}_8$.
\end{proposition}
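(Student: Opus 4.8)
The plan is to pass to the classification of Theorem~\ref{classification} and then to use multiplicative idempotency to force the underlying (semi)lattice to have at most two elements. I would begin with two reductions. Since $w$ is the greatest element of $S$ with respect to the addition it is additively absorbing, so the hypothesis that $w$ is not bi-absorbing says precisely that $w$ is not multiplicatively absorbing, i.e.\ $w$ fails to be left multiplicatively absorbing or fails to be right multiplicatively absorbing. For $|S|=2$ the statement reduces to Proposition~\ref{3.0}, so assume $|S|\ge 3$. If $w$ is right but not left multiplicatively absorbing, I would replace $S$ by $S^{op}$, which is again a finite bi-idempotent congruence-simple semiring with the same greatest element $w$, now left but not right multiplicatively absorbing; since $\mathbb{S}_6=\mathbb{S}_5^{op}$ and $\mathbb{S}_8=\mathbb{S}_7^{op}$ by Propositions~\ref{3.0} and~\ref{3.2}, it then suffices to treat the two cases of Theorem~\ref{classification}: (i) $w$ is neither left nor right multiplicatively absorbing, and (ii) $w$ is left but not right multiplicatively absorbing.

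In case (i) (respectively (ii)) Theorem~\ref{classification} produces a finite (semi)lattice $L$ and an isomorphism of $S$ onto a subsemiring $R$ of $End_0(L)$ (respectively $End(L)$) containing the spanning maps $e_{a,b}$ (respectively the maps $f$) described there. Because $S$ is multiplicatively idempotent and multiplication in $R$ is composition, every $g\in R$ satisfies $g\circ g=g$. The key point is to deduce $|L|\le 2$. In case (ii): suppose some $b\in L\setminus\{1\}$ is not the least element of $L$; choose $a\in L$ with $b\not\le a$ and take the map $f\in R$ with $f(x)=b$ for $x\le a$ and $f(x)>b$ otherwise. Then $f(a)=b$, so idempotency gives $f(b)=f(f(a))=f(a)=b$, contradicting $f(b)>b$ (which holds because $b\not\le a$). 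Hence every element of $L$ other than $1$ is the least element of $L$, so $|L|\le 2$; since $|S|\ge 3$ and $|End(\{0,1\})|=3$ by Proposition~\ref{3.2}, this forces $L=\{0,1\}$, $R=End(L)$ and $S\cong\mathbb{S}_7$. In case (i): applying the same idea to $e_{a,a}$ with $a\in L\setminus\{0,1\}$ gives $0=e_{a,a}(a)=e_{a,a}(e_{a,a}(1))=e_{a,a}(1)=a$ (using $1\not\le a$), a contradiction; so $|L|\le 2$ again, but $|End_0(\{0,1\})|=2<3=|S|$, so case (i) is impossible. Combined with the reduction to $S^{op}$, this yields the result.

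The step I expect to be the main obstacle is the inequality $|L|\le 2$: it is the only place where multiplicative idempotency is genuinely used, and it has to be played off exactly against the spanning families supplied by Theorem~\ref{classification}. One must also be attentive to the degenerate possibilities (an element already least in $L$, a trivial $L$, or $R$ a proper subsemiring of $End(L)$) so that no larger congruence-simple semiring slips through; once $|L|\le 2$ is in place, matching the surviving cases to $\mathbb{S}_5,\dots,\mathbb{S}_8$ via Propositions~\ref{3.0} and~\ref{3.2} is routine bookkeeping.
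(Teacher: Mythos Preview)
Your proposal is correct and follows essentially the same approach as the paper's own proof: reduce to $|S|\ge 3$, invoke Theorem~\ref{classification}, and use multiplicative idempotency of the spanning maps $e_{a,a}$ (case (i)) and $f$ (case (ii)) to force $|L|\le 2$, handling the right-absorbing case by passing to $S^{op}$. The only cosmetic differences are the order in which you treat the cases and your slightly cleaner choice of the pair $(a,b)$ in case~(ii) (the paper picks $a,b\in L\setminus\{1\}$ with $a\not\le b$, you pick $b$ not least and then $a$ with $b\not\le a$); also, where you write ``$2<3=|S|$'' you mean ``$2<3\le|S|$''.
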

\begin{proof}
If $|S|=2$ then, by Proposition \ref{3.0},  $S$ is isomorphic either to $\mathbb{S}_5$ or $\mathbb{S}_6$. For the rest of the proof we therefore consider that $|S|\geq 3$. Let $w\in S$ be the greatest element in $S$ with respect to the addition.

 Assume, first, that $w$ is neither left nor right multiplicatively absorbing. By Theorem \ref{classification}(i), there is a finite (semi)lattice $L$ (with the least element $0$ and the greatest element $1$) such that $S$ is isomorphic to a subsemiring $R$ of $End_{0}(L)$ and for every $a,b\in L$ there is $e_{a,b}\in R$ such that for every $x\in L$ is $e_{a,b}(x)=0$ if $x\leq a$ and $e_{a,b}(x)=b$ otherwise. Assuming that there is $a\in L$ such that $0\neq a\neq 1$ we obtain that $0=e_{a,a}(a)=e_{a,a}(e_{a,a}(1))=e_{a,a}(1)=a$, a contradiction. Hence $|L|=2$ and therefore $|End_{0}(L)|=2$, by Proposition \ref{3.2}. Thus $|S|=2$ and we obtain a contradiction with the assumption that $|S|\geq 3$.

 Assume further, that $w$ is left but not right multiplicatively absorbing.  By Theorem \ref{classification}(ii), there is a finite semilattice $L$ (with the greatest element $1$) such that $S$ is isomorphic to a subsemiring $R$ of $End(L)$ and for every $a\in L$ and $b\in L\setminus\{1\}$ there is $f\in R$ such that for every $x\in L$ is $f(x)=b$ if $x\leq a$ and $f(x)>b$ otherwise. If there are $a\in L\setminus\{1\}$ and $b\in L\setminus\{1\}$ such that $a\not\leq b$ then we obtain that $b<f(b)=f(f(a))=f(a)=b$, a contradiction. Hence $|L|=2$ and $|End(L)|=3$, by Proposition \ref{3.2}. Therefore $S\cong End(L)=\mathbb{S}_7$.
 
 Finally, assume that $w$ is right but not left multiplicatively absorbing. Then the semiring $S^{op}$ is finite and congruence-simple. Further, $S^{op}$ has a greatest element (with respect to the addition) that is left but not right multiplicatively absorbing (and $|S^{op}|\geq 3$). By the previous part of the proof, we have that $S^{op}\cong \mathbb{S}_7$. Hence $S\cong \mathbb{S}_7^{op}=\mathbb{S}_8$.
\end{proof}

\begin{theorem}\label{3.3}
 Let $S$ be a finite multiplicatively idempotent congruence-simple semiring. Then $S$ is isomorphic to one of the eight two/three-element semirings $\mathbb{S}_1,\dots,\mathbb{S}_8$.
\end{theorem}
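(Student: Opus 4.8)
The plan is to reduce everything to results already in hand, by a case split on (a) whether $S$ has a multiplicatively absorbing element and (b) the cardinality of $S$. Since a congruence-simple semiring has two distinct congruences, necessarily $|S|\geq 2$, so there are three cases to treat.

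First I would dispose of the case in which $S$ possesses a multiplicatively absorbing element: here Theorem \ref{2.9} applies directly and gives $S\cong\mathbb{S}_1,\mathbb{S}_2,\mathbb{S}_3$ or $\mathbb{S}_4$. From then on assume $S$ has no multiplicatively absorbing element. If $|S|=2$, Proposition \ref{3.0} yields immediately $S\cong\mathbb{S}_5$ or $S\cong\mathbb{S}_6$ (and notes $S$ is automatically bi-idempotent). It remains to handle $|S|\geq 3$ with no multiplicatively absorbing element. By Proposition \ref{3.1}, $S$ is then additively idempotent, hence bi-idempotent, so $(S,+)$ is a semilattice; as $S$ is finite, $w=\sum_{s\in S}s$ is the greatest element of $(S,+)$. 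Since $S$ has no multiplicatively absorbing element, $w$ in particular fails to be bi-absorbing, so the hypotheses of Proposition \ref{non-bi-absorbing} are satisfied and we obtain $S\cong\mathbb{S}_5,\mathbb{S}_6,\mathbb{S}_7$ or $\mathbb{S}_8$; the constraint $|S|\geq 3$ then forces $S\cong\mathbb{S}_7$ or $S\cong\mathbb{S}_8$. Assembling the three cases gives the claim.

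The proof is therefore essentially a bookkeeping argument: the real mathematical weight — Zumbr\"agel's classification of finite bi-idempotent congruence-simple semirings (Theorem \ref{classification}), and the reductions carried out in Theorem \ref{2.9}, Proposition \ref{3.1} and Proposition \ref{non-bi-absorbing} — has already been done upstream. I expect the only delicate points, and hence the closest thing to an obstacle, to be purely those of hygiene: checking that the case split is genuinely exhaustive, that a finite additively idempotent semiring really does carry a greatest additive element (clear, since a finite semilattice has a top), and that ``no multiplicatively absorbing element'' does rule out $w$ being bi-absorbing (immediate, as bi-absorbing implies multiplicatively absorbing), so that Proposition \ref{non-bi-absorbing} may legitimately be invoked.
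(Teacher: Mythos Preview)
Your proposal is correct and follows essentially the same route as the paper: both arguments reduce to Theorem~\ref{2.9}, Proposition~\ref{3.1}, and Proposition~\ref{non-bi-absorbing}, with the only difference being the order of the case split (you split first on the existence of a multiplicatively absorbing element, the paper splits first on $|S|=2$ versus $|S|\geq 3$). The hygiene points you flag are exactly the ones that matter, and you handle them correctly.
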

\begin{proof}
Let $|S|=2$. Then, by Propositions \ref{2.1.1} and \ref{3.0}, $S$ is isomorphic to one of the six semirings $\mathbb{S}_1,\dots,\mathbb{S}_6$.

Assume that $|S|\geq 3$. Then, by Propositions \ref{3.1}, $S$ is bi-idempotent. As $S$ finite, there is a greatest element $w$ with respect to the addition. If $w$ is multiplicatively absorbing then, by Theorem \ref{2.9}, $|S|=2$, a contradiction. Hence $w$ is not multiplicatively absorbing and therefore, by Proposition \ref{non-bi-absorbing},  $S$ is isomorphic to either $\mathbb{S}_7$ or $\mathbb{S}_8$.
%  In view of \ref{2.9}, we will assume that $S$ has no multiplicatively absorbing element. If $|S|=2$, then one sees readily that $S\cong\mathbb{S}_5,\mathbb{S}_6$. Consequently, let $|S|\geq 3$. By \ref{3.1}, $S$ is bi-idempotent. Th element $z=\sum S$ is additively absorbing in $S$. The rest of the proof is divided into three parts:
%  
%  (i) Assume that $z$ is left multiplicatively absorbing. Then $z$ is not right multiplicatively absorbing and, by \cite{????}, there is a finite semilattice $L(=L(+))$ such that $S$ is isomorphic to a subsemiring $T$ of the endomorphism semiring $End(L)$ of the semilattice $L$ and the following condition is satisfied:
%  
%  ($\ast$) For every $a\in L$ and every $b\in L\setminus\{1_L\}$, $1_L=\sum L$, there is $f\in T$ such that $f(x)=b$ for $x+a=a$ and $f(x)+b=f(x)\neq b$ for $x+a\neq a$.
%  
%  Now, if $|L|\geq 3$, we can find $a,b\in L\setminus\{1_L\}$ such that $a+b\neq b$ and, using $(\ast)$, we get $f(a)=b\neq f(b)+b=f(b)=f(f(a))=f(a)=b$, a contradiction. It follows that $L$ is a two element semilattice. It is easy to check that then $End(L)\cong\mathbb{S}_7$.
%  
%  Since $|T|\geq 3$, we have $T=End(L)$ and $S\cong\mathbb{S}_7$.
%  
%  (ii) Assume that $z$ is right multiplicatively absorbing. Using (i), we get $S^{op}\cong\mathbb{S}_7$ (the opposite semiring) and $S\cong\mathbb{S}^{op}_{7}=\mathbb{S}_8$.
%  
%  (iii) Let $z$ be neither left nor right multiplicatively absorbing. Does nor occur...a direct proof needed ....
\end{proof}

 With regard to Remark \ref{remark_5} and Theorems \ref{2.9} and \ref{3.3}, it is natural to consider the following conjecture:

\begin{conjecture}
Every multiplicatively idempotent congruence-simple semiring is finite (and isomorphic to one of the semirings $\mathbb{S}_1$, $\dots$, $\mathbb{S}_8$).
\end{conjecture}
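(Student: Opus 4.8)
The plan is to reduce the statement entirely to results already in hand, splitting on the cardinality of $S$. If $|S|=2$, then multiplicative idempotency is automatic and (by Remark \ref{remark_1}(v)) so is congruence-simplicity; here I would simply distinguish whether or not $S$ has a multiplicatively absorbing element. In the first case Proposition \ref{2.1.1} enumerates the four possibilities $\mathbb{S}_1,\dots,\mathbb{S}_4$; in the second case Proposition \ref{3.0} gives $\mathbb{S}_5$ or $\mathbb{S}_6$. So the two-element case contributes exactly six semirings.

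For $|S|\geq 3$, I would first invoke Proposition \ref{3.1} to conclude that $S$ is additively idempotent, hence bi-idempotent. Then the additive reduct of $S$ is a finite semilattice, so it possesses a greatest element $w$ with respect to the addition. The key (and only genuinely new) step is to case on whether this additive top $w$ is multiplicatively absorbing: if it is, Theorem \ref{2.9} forces $|S|=2$, contradicting $|S|\geq 3$. Therefore $w$ is not multiplicatively absorbing; in particular it is not bi-absorbing, so the hypotheses of Proposition \ref{non-bi-absorbing} are met and it yields $S\cong\mathbb{S}_7$ or $S\cong\mathbb{S}_8$. Combining the two cases gives the eight semirings claimed.

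The bulk of the difficulty is not in this assembly but is inherited from the ingredients. Proposition \ref{3.1}, which promotes congruence-simple multiplicative idempotents to bi-idempotents, is where the congruence $\sigma$ given by $2x=2y$ and the reduction to the Boolean/$\Z_2$ case do real work. The main obstacle, however, lies in Proposition \ref{non-bi-absorbing}, which rests on the structural classification of finite bi-idempotent congruence-simple semirings (Theorem \ref{classification}, due to Zumbr\"agel) and then collapses the underlying (semi)lattice $L$ to two elements by evaluating the distinguished endomorphisms $e_{a,b}$ and $f$ on suitable arguments. Relative to all of that, the contribution of the present theorem is the short observation that once $|S|\geq 3$ and $S$ is bi-idempotent, its additive maximum cannot be multiplicatively absorbing --- which is immediate from Theorem \ref{2.9} --- so that the non-bi-absorbing branch of the classification is the only one that survives.
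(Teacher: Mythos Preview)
Your argument does not prove the conjecture; it reproves Theorem~\ref{3.3}. The statement you were asked to address is a \emph{conjecture} in the paper---it is explicitly left open (see the abstract: ``It seems to be an open question, whether $S$ can be infinite at all''). The paper offers no proof of it, so there is nothing to compare your proposal against; the real question is whether your argument supplies what the paper is missing. It does not.

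The gap is that you silently assume $S$ is finite. After invoking Proposition~\ref{3.1} you write ``the additive reduct of $S$ is a finite semilattice, so it possesses a greatest element $w$''. Proposition~\ref{3.1} gives you additive idempotency, hence a semilattice, but nothing in your argument (or in the paper) bounds the cardinality of $S$. Without finiteness you have no greatest element $w$, and more seriously you cannot invoke Proposition~\ref{non-bi-absorbing}: that proposition explicitly assumes $S$ is finite, because it rests on Theorem~\ref{classification} (the Kendziorra--Zumbr\"agel classification), which is a theorem about \emph{finite} bi-idempotent congruence-simple semirings. So every ingredient you appeal to in the $|S|\geq 3$ branch already presupposes the very finiteness the conjecture asserts. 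What you have written is, essentially verbatim, the paper's proof of Theorem~\ref{3.3}; the actual content of the conjecture---ruling out infinite examples---remains untouched.
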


%\begin{example}\label{3.4}
% Let $p$ be a prime number. Define a multiplication on the group $\Z_p(+)$ of integers modulo $p$ by $ab=b$. Then $\Z_p(+,\cdot)$ is a congruence-simple multiplicatively idempotent left near-semiring.
% \end{example}

% Is there a bi-idempotent congruence-simple left near-semiring? Are there some other similar structures? ($S(\cdot)$ a semilattice, $S(+)$ a band, $\cdot$ distributes over $+$, cong-simple,...)

\section{Multiplicatively divisible commutative semirings}

In \cite{parasem,conj} additively divisible commutative semirings were studied. Analogous questions may be raised for the multiplicative parts of commutative semirings.

 We call a semiring $S$ \emph{multiplicatively divisible} if for every $a\in S$ and every $n\in\N$ there is $b\in S$ such that $a=b^n$. A semiring $P$ is called a \emph{parasemifield} if the multiplicative semigroup $P(\cdot)$ is a group.

Clearly, every multiplicatively idempotent semiring is multiplicatively divisible. On the other hand, any algebraically closed field of characteristic $0$ (e.g., the countable field of algebraic real numbers) is congruence-simple and ideal-simple and both additively and multiplicatively divisible, but it is not multiplicatively idempotent.  Of course, no infinite field is finitely generated as a (semi)ring. The following conjecture is now in force.

\begin{conjecture}\label{conj_2}
Let $S$ be a finitely generated commutative semiring. If $S$ is multiplicatively divisible then $S$ is multiplicatively idempotent.
\end{conjecture}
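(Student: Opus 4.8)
The plan is to exploit that the class $\mathcal{D}$ of finitely generated commutative multiplicatively divisible semirings is closed under homomorphic images: if $\varphi\colon S\to S'$ is a surjective semiring homomorphism and $S\in\mathcal{D}$, then $S'$ is again finitely generated and commutative, and it is multiplicatively divisible because any $b$ with $a=b^n$ maps to a $\varphi(b)$ with $\varphi(a)=\varphi(b)^n$. Since $x^2=x$ is an identity, it suffices to rule out a nonidempotent element in every member of $\mathcal{D}$, and for this I would reduce to the subdirectly irreducible members, where a single structure-bearing congruence can be analysed. The whole difficulty is then to show that multiplicative divisibility, together with finite generation \emph{as a semiring}, is incompatible with the existence of an $a$ satisfying $a^2\neq a$.

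Next I would split according to the behaviour of the additive reduct. The first branch is the \emph{cancellative} one: suppose $S$ has a nonzero quotient whose additive semigroup is cancellative. Arguing as in Remark~\ref{remark_1}(vi), such a quotient embeds into its difference ring $R$, a finitely generated commutative ring. Passing to $R/\mathfrak p$ for a prime $\mathfrak p$ and then to the normalisation, one obtains a finitely generated commutative domain $D$ receiving a multiplicatively divisible image of $S$. Here two standard facts finish the branch: the unit group of a finitely generated domain is a finitely generated abelian group, so a divisible subgroup of it is trivial; and a nonzero non-unit $t$ of a Noetherian domain lies in a height-one prime and hence admits a discrete valuation $v$ with $v\ge 0$ on $D$ and $v(t)>0$, which is incompatible with $t$ having $n$-th roots for all $n$ (as $v(t)$ would then be divisible by every $n$). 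Thus the image collapses to an $\mathbb{F}_2$-type domain, where $x^2=x$; tracing this back through the reductions yields multiplicative idempotency on the cancellative part.

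The second, genuinely harder branch is when $S$ (or the relevant quotient) is \emph{additively idempotent}, i.e.\ a commutative dioid ordered by $a\le b\iff a+b=b$. Multiplication is then monotone, and the task is to reconcile this order with the presence of $n$-th roots. The approach I would try is to manufacture a nontrivial monotone multiplicative homomorphism (a ``valuation'') from $S(\cdot)$ into a totally ordered, additively written semigroup and to argue, exactly as in the ring branch, that divisibility forces every value to be infinitely divisible and hence trivial, which would pin $S$ down to an additively idempotent semiring all of whose elements satisfy $a^2=a$. When $S$ is congruence-simple one can instead hope to invoke the classification of this paper (Theorems~\ref{2.9} and~\ref{3.3}) together with \cite{parasem,conj} to conclude directly that $S$ lies among the idempotent examples.

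The main obstacle is precisely this additively idempotent branch. The trouble is that, although $S$ is finitely generated \emph{as a semiring}, its multiplicative semigroup $S(\cdot)$ need not be finitely generated (already in $\N[x]$ the elements $1+x^m$ behave independently under multiplication), so the classical divisibility theory of finitely generated commutative semigroups does not apply, and there is no ambient ring or group in which to locate a valuation. Extracting a usable monotone multiplicative valuation, or otherwise controlling $n$-th roots using only the semiring-theoretic finiteness, is exactly the crux of the conjecture and is, I expect, the reason it remains open.
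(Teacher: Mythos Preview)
This statement is Conjecture~\ref{conj_2} in the paper and is presented there as an \emph{open problem}; the paper contains no proof of it. What the paper does offer is evidence in two special cases: Remark~\ref{remark_3} verifies the conjecture for rings, and Remark~\ref{remark_4} for parasemifields, both ultimately resting on the semigroup fact (Remark~\ref{semigroup}) that a finitely generated commutative divisible semigroup is a band. So there is no ``paper's own proof'' to compare against, only these partial results.

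Your outline is candid in that you locate the additively idempotent branch as the crux and say outright that it remains open. Two comments on the parts you do sketch. First, your dichotomy ``additively cancellative quotient vs.\ additively idempotent'' for subdirectly irreducible commutative semirings is not justified and is not known to be exhaustive; subdirectly irreducible semirings can have more complicated additive structure (neither cancellative nor idempotent), so even granting both branches your case split would not yet cover everything. Second, your treatment of the ring branch via normalisation, finite generation of unit groups, and discrete valuations at height-one primes is correct in spirit but considerably heavier than the paper's argument in Remark~\ref{remark_3}. There one simply notes that a finitely generated commutative ring $R$ is Jacobson and Noetherian, so $\mathcal{J}(R)$ coincides with the nilradical and is nilpotent; multiplicative divisibility of this radical ideal then forces it to vanish, while $R/\mathcal{J}(R)$ embeds in a product of residue fields, each of which is finite and multiplicatively divisible and hence isomorphic to $\Z_2$ by Remark~\ref{semigroup}. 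Both routes settle the ring case, but the paper's avoids normal domains, unit-group theorems, and valuation theory altogether.
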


The following Remarks \ref{remark_3} and \ref{remark_4} support the plausibility of this conjecture.

\begin{remark}\label{semigroup}
 Let $S$ be a divisible semigroup. Then $S$ is a finite band in each of the following two cases:
 
 \begin{enumerate}
  \item[(1)] $S$ is commutative and finitely generated. 
  \item[(2)] $S$ is finite.
 \end{enumerate}
This result is a sort of folklore (see, e.g., \cite[Theorem 2.5]{szekely}).
\end{remark}

\begin{remark}\label{remark_3}
Every finitely generated multiplicatively divisible commutative ring is a Boolean ring. Hence Conjecture \ref{conj_2} holds in case of rings.

Indeed, let $R$ be a finitely generated commutative ring (not necessary with a unity).
First, let us recall  a well-known fact (attributed sometimes to Kaplansky) that every field $F$ that is a factor of $R$ is finite. Let $R$ be multiplicatively divisible. Then, by Theorem \ref{semigroup}, the only such a field $F$ is $\Z_2$. Let $\mathcal{J}(R)$ be the Jacobson radical of $R$. Then $R/\mathcal{J}(R)$ is a subring of product of fields that are factors of $R$, i.e., of fields isomorphic to $\Z_2$. Such a product is multiplicatively idempotent and so is the ring $R/\mathcal{J}(R)$. 

To prove that $R$ is multiplicatively idempotent too, it is enough to show that $\mathcal{J}(R)=0$. Since the ring of all integers $\Z$ is a Jacobson ring (i.e., every prime ideal is an intersection of maximal ideals) and $R$ is a finitely generated $\Z$-algebra, the ring $R$ is Jacobson too. Hence, the Jacobson radical and the nilradical of $R$ coincide, by the definition of the Jacobson ring. Finally, as $R$ is also noetherian, the nilradical of $R$ is nilpotent. Thus, there is $n\in\N$ such that $\big(\mathcal{J}(R)\big)^n=0$. Further, $\mathcal{J}(R)$ is a radical ideal and therefore the ring $\mathcal{J}(R)$ is multiplicatively divisible. Hence, for every $a\in \mathcal{J}(R)$ there is $b\in \mathcal{J}(R)$ such that $a=b^n\in \big(\mathcal{J}(R)\big)^n=0$. This proves that $\mathcal{J}(R)=0$ and that $R$ is multiplicatively idempotent.
 %Using Birkhoff's theorem is is enough to show this for finitely generated commutative subdirectly irreducible rings. 
%The only such a ring that is multiplic. divisible is the field $\Z_2$ (it's my opinion at the moment...).
\end{remark}

\begin{remark}\label{remark_4}
According to \cite[Corollary 4.6]{parasem} if $S$ is a commutative  parasemifield, that is finitely generated as a semiring, then the multiplicative semigroup $S(\cdot)$ is finitely generated. Combining this result and  \ref{semigroup} we see that the semigroup $S(\cdot)$ is idempotent. In fact, as $S(\cdot)$ is a group, the only such a parasemifield $S$ is the trivial one. Hence Conjecture \ref{conj_2} holds in case of parasemifields.
\end{remark}

% = Bibliography ==============================================================

\end{document}